      \def\dR{{\mathbb R}}
\def\cD{{\mathcal D}}      
      \def\cL{{\mathcal L}}
      \def\cO{{\mathcal O}}
\def\cal H{{\mathcal H}}
\def\R{\mathbb{R}}
\def\C{\mathbb{C}}
\def\N{\mathbb{N}}
\DeclareMathOperator{\dom}{dom}
\DeclareMathOperator{\ran}{ran}
\def\loc{{\text{\rm loc}}}
\def\spann{\textup{span}}
\def\phi{\varphi}
\def\d{\textup{d}}
\def\eps{\varepsilon}
\def\fa{\mathfrak{a}}
\DeclareMathOperator{\supp}{supp}
\DeclareMathOperator{\Imag}{Im}
\newtheorem{theorem}{Theorem}[section]
\newtheorem*{thm*}{Theorem}
\newtheorem{proposition}[theorem]{Proposition}
\newtheorem{corollary}[theorem]{Corollary}
\newtheorem{lemma}[theorem]{Lemma}
\theoremstyle{definition}
\newtheorem{definition}[theorem]{Definition}
\newtheorem*{ack}{Acknowledgement}
\numberwithin{equation}{section}
\title[Inverse problems with partial data on unbounded domains]{Inverse problems with partial data for elliptic operators on unbounded Lipschitz domains}
\author[J.~Behrndt]{Jussi Behrndt}
\address{Institut f\"ur Numerische Mathematik \\
Technische Universit\"at Graz \\
Steyrergasse 30\\
8010 Graz\\
Austria}
\email{behrndt@tugraz.at}
\author[J.~Rohleder]{Jonathan Rohleder}
\address{Stockholms universitet, Matematiska institutionen, 10691 Stockholm, Sweden}
\email{jonathan.rohleder@math.su.se}
\begin{document}

\begin{abstract}
For a second order formally symmetric elliptic differential expression we show that the knowledge of the 
Dirichlet-to-Neumann map or Robin-to-Dirichlet map for suitably many energies on an arbitrarily small open subset of the boundary determines 
the self-adjoint operator with a Dirichlet boundary condition or with a (possibly non-self-adjoint) Robin boundary condition uniquely up 
to unitary equivalence. These results hold for general Lipschitz domains, which can be unbounded and may have a non-compact boundary, 
and under weak regularity assumptions on the coefficients of the differential expression.
\end{abstract}

\maketitle

\section{Introduction}

Let $\cL$ be a uniformly elliptic, formally symmetric differential expression of the form
\begin{align}\label{eq:diffexpr}
 \cL = - \sum_{j,k = 1}^n \partial_j a_{jk} \partial_k + \sum_{j = 1}^n \big( a_j \partial_j - \partial_j \overline{ a_j} \big)  + a
\end{align}
on a possibly unbounded Lipschitz domain $\Omega$. For appropriate $\lambda \in \C$, the corresponding Dirichlet-to-Neumann map is given by
\begin{align*}
 M (\lambda) : H^{1/2} (\partial \Omega) \to H^{- 1/2} (\partial \Omega), \quad u_\lambda |_{\partial \Omega} \mapsto \partial_\cL u_\lambda |_{\partial \Omega},
\end{align*}
where $u_\lambda \in H^1 (\Omega)$ solves the differential equation $\cL u = \lambda u$, $u_\lambda |_{\partial \Omega}$ denotes the trace of $u_\lambda$ 
on the boundary $\partial \Omega$ and $\partial_\cL u_\lambda |_{\partial \Omega}$ is the conormal derivative of $u_\lambda$ on $\partial \Omega$ 
with respect to $\cL$. In the present paper it will be shown that the partial knowledge of $M (\lambda)$ on an arbitrarily small nonempty, relatively open subset $\omega$ of $\partial \Omega$ for a set of points $\lambda$ with an accumulation point determines the self-adjoint Dirichlet operator
\begin{align*}
 A_{\rm D} u = \cL u, \quad \dom A_{\rm D} = \left\{ u \in H^1 (\Omega) : \cL u \in L^2 (\Omega), u |_{\partial \Omega} = 0 \right\},
\end{align*}
and other realizations of $\cL$ with (possibly non-self-adjoint) Robin boundary conditions uniquely up to unitary equivalence in $L^2 (\Omega)$. We impose weak regularity assumptions on the coefficients, that is, $a_{jk}, a_j : \overline{\Omega} \to \C$ are bounded 
Lipschitz functions, $1 \leq j, k \leq n$, and $a : \Omega \to \R$ is measurable and bounded. We emphasize that $\Omega$ is an unbounded Lipschitz 
domain without any additional geometric restrictions, and that 
$\omega$ may be a bounded subset of $\partial \Omega$ even in the case that $\partial \Omega$ is unbounded.

The interplay between elliptic differential operators and their corresponding Dirichlet-to-Neumann maps is of particular interest for spectral theory and 
inverse problems, among them the famous Calder\'on problem, the multidimensional Gelfand inverse boundary spectral problem, and inverse scattering problems on Riemannian manifolds. 
In his famous paper~\cite{C80} A.~Calder\'on asked whether the uniformly positive coefficient $\gamma$ in the differential expression 
$- \nabla \cdot \gamma \nabla$ on a bounded domain $\Omega$ is uniquely determined by the Dirichlet-to-Neumann map on the boundary 
$\partial \Omega$ or on parts of the boundary; this corresponds to the case $a_{jk} = \gamma \delta_{jk}$, $a_j = a = 0$ in~\eqref{eq:diffexpr}, 
and $\gamma$ describes the isotropic conductivity of an inhomogeneous body. There is an extensive literature on this topic and 
uniqueness of the coefficient $\gamma$ from the knowledge of $M (0)$ has been shown under rather general regularity assumptions, see, e.g.,~\cite{AP06,N88,N96,NSU88,SU87} and
\cite{BU02,IUY10,KSU07,NS10} for results with partial data, as well as \cite{ALP05,DKSU09,LU89,SU03,S90,SU91} for the more general case of an 
anisotropic conductivity ($a_j = a = 0$ in~\eqref{eq:diffexpr}) and the surveys~\cite{U09,U14,U14-2}.
If $\Omega$ is an unbounded domain the situation is much more difficult since, very roughly speaking, the spectrum 
contains continuous parts. For conductivities that are constant outside compact sets, special unbounded domains (infinite slabs or transversally anisotropic geometries),
and magnetic Schr\"{o}dinger operators, uniqueness results were shown in \cite{CM16,CS15,DKLS16,I01,KKS15,K18,K19,KLU12,LTU03,LU01,LU10,P14,SW06}. 

In Gelfands inverse boundary spectral problem -- which is a variant of the inverse problems discussed in the present paper for bounded domains -- 
one reconstructs from the given boundary spectral data on a compact manifold (consisting of eigenvalues and boundary data of eigenfunctions of a self-adjoint elliptic operator) 
the manifold and its metric (up to gauge equivalence) with the help of the boundary control method; cf. \cite{AKKLT04,B88,B97,B07,BK92,KK98,KKL01,KL06} and \cite{KL97,L98} for the non-self-adjoint case. 
There is also a  strong recent interest in closely related problems in inverse scattering theory on compact and non-compact Riemannian manifolds; 
here the main theme is the reconstruction of 
the manifold and its Riemannian metric from the knowledge of the scattering matrix for the 
Laplace-Beltrami operator, see e.g. \cite{BK92,I04,IK14,IKL10,IKL14,IKL17,KKL01,LSU15}.

The inverse problems discussed in this paper are of a somewhat more abstract, but also more general nature. In Sections~\ref{333} and \ref{4444} it will be shown that the knowledge of the 
Dirichlet-to-Neumann map for a suitable set of points $\lambda$ with an accumulation point on an arbitrarily small open subset of the boundary 
determines the self-adjoint Dirichlet operator and other non-self-adjoint realizations
with mixed Dirichlet-Robin boundary conditions up to unitary equivalence. 
We treat here the general case of an unbounded Lipschitz domain without any additional geometric restrictions and assume 
weak regularity assumptions on the coefficients of the elliptic differential expression.
We emphasize 
that unitary equivalence determines the spectral properties,
so that, in particular, 
the isolated and embedded eigenvalues, continuous, essential, absolutely continuous and singular continuous spectra are uniquely determined 
by the partial knowledge of the Dirichlet-to-Neumann map. Finally, in Section~\ref{55555} another variant 
of our uniqueness result is provided for self-adjoint Robin realizations, where instead of the Dirichlet-to-Neumann map a Robin-to-Dirichlet map on an open subset
of the boundary is considered. 
The main results in this paper complement earlier results for bounded domains from \cite{BR12}, see also~\cite{O17}, where the uniqueness problem is substantially easier since
all spectral singularities are discrete eigenvalues, and hence poles of the Dirichlet-to-Neumann map. 
Our proofs in the present paper are based on more elaborate methods from the extension theory of symmetric operators and the
spectral theory of elliptic operators; related techniques were also developed and used in \cite{BR15,BR16} for the spectral analysis of 
Schr\"{o}dinger and more general elliptic operators.
In this context we also refer the reader to~\cite{AGW14,GM09,GM11,G08,G11,G11b,G12,M10,MPS16,P08,PR09,P16} for some recent related papers on spectral theory 
of elliptic differential operators, to the classical contributions \cite{G68,V52}, and to~\cite{AE11,AE15,AEKS14,BGW09,BMNW08,BMNW17,GM08} for operator-theoretic 
approaches to Dirichlet-to-Neumann and Robin-to-Dirichlet maps.

\section{Preliminaries}\label{sec:2}

In this section we provide some preliminaries on elliptic differential operators on possibly unbounded Lipschitz domains. Throughout this paper we assume that 
$\Omega \subset \R^n$, $n \geq 2$, is a connected Lipschitz domain in the sense of, e.g.,~\cite[VI.3]{S70}, that is, $\Omega$ is an open, 
connected set with a nonempty boundary $\partial \Omega$ and there exist $\eps > 0$, $N \in \N$, $M > 0$ and (finitely or infinitely many) open sets $U_1, U_2, \dots$ with the following properties.
\begin{enumerate}
 \item For each $x \in \partial \Omega$ there exists $j$ such that the open ball $B (x, \eps)$ of radius $\eps$ centered at $x$ is contained in $U_j$.
 \item No point of $\R^n$ is contained in more than $N$ of the $U_j$.
 \item For each $j$ there exists a function $\zeta_j : \R^{n - 1} \to \R$ with
 \begin{align*}
  |\zeta_j (x) - \zeta_j (y)| \leq M |x - y|, \quad x, y \in \R^{n - 1},
 \end{align*}
 such that (up to a possible rotation of coordinates) the Lipschitz hypographs
 \begin{align*}
 \Omega_j := \left\{ (x_1, \dots, x_n)^\top \in \R^n : x_n < \zeta_j (x_1, \dots, x_{n-1}) \right\}
 \end{align*}
 satisfy $U_j \cap \Omega = U_j \cap \Omega_j$.
\end{enumerate}

We are particularly interested in the case that $\Omega$ is unbounded. Note that the boundary $\partial\Omega$ may be noncompact.
It can be described by the graphs of countably many Lipschitz functions with a joint Lipschitz constant.

In the following we denote by $H^s (\Omega)$ and $H^t (\partial \Omega)$ the Sobolev spaces of order $s \in \R$ on $\Omega$ and of order $t\in [-1,1]$ on 
its boundary $\partial \Omega$, respectively. We point out that under the above assumptions on $\Omega$ many typical properties of Sobolev spaces on bounded Lipschitz 
domains and their boundaries remain true. For instance, by the same proofs as provided in~\cite[Theorem~3.37 and Theorem~3.40]{M00} for bounded domains, one verifies that there exists a continuous, surjective trace operator from $H^1 (\Omega)$ onto $H^{1/2} (\partial \Omega)$ and that its kernel coincides with $H_0^1 (\Omega)$, the closure of $C_0^\infty (\Omega)$ in $H^1 (\Omega)$. In the following we denote the trace of a function $u \in H^1 (\Omega)$ by~$u |_{\partial \Omega}$.

On $\Omega$ let us consider the differential expression $\cL$ in~\eqref{eq:diffexpr} satisfying the uniform ellipticity condition
\begin{align}\label{eq:elliptic}
 \sum_{j,k=1}^n a_{jk}(x) \xi_j\xi_k\geq E \sum_{k=1}^n\xi_k^2, \quad \xi=(\xi_1,\dots\xi_n)^\top \in \dR^n,\,\, x \in \overline \Omega,
\end{align}
for some $E > 0$. We assume that 
\begin{align}\label{eq:ajkaj}
 a_{jk}, a_j : \overline{\Omega} \to \C~\text{are bounded Lipschitz functions}, \quad 1 \leq j, k \leq n, 
\end{align}
\begin{align}\label{eq:ajk}
 a_{jk} (x) = \overline{a_{kj} (x)}, \quad x \in \overline{\Omega},
\end{align}
and that 
\begin{align}\label{eq:a}
 a : \Omega \to \R~\text{is measurable and bounded}.
\end{align}

In the following we make use of the conormal derivative (with respect to $\cL$). For a function $u \in H^1 (\Omega)$ such that $\cL u \in L^2 (\Omega)$ 
in the sense of distributions, the conormal derivative of $u$ at $\partial \Omega$ with respect to $\cL$ is defined as the unique 
$\psi \in H^{- 1/2} (\partial \Omega)$ which satisfies the identity
\begin{align*}
 \fa [u, v] = (\cL u, v)_{L^2(\Omega)} + (\psi, v |_{\partial \Omega})_{\partial \Omega}
\end{align*}
for all $v \in H^1 (\Omega)$, where $(\cdot, \cdot)_{L^2(\Omega)}$ is the inner product in $L^2 (\Omega)$, $(\cdot, \cdot)_{\partial \Omega}$ 
denotes the (sesquilinear) duality of $H^{- 1/2} (\partial \Omega)$ and $H^{1/2} (\partial \Omega)$, and 
\begin{align}\label{eq:aForm}
  \fa [u, v] & = \int_\Omega \bigg( \sum_{j,k = 1}^n a_{jk} \partial_k u \cdot \overline{\partial_j v} + \sum_{j = 1}^n \big( a_j \partial_j u \cdot  \overline v + \overline{a_j} u \cdot \overline{\partial_j v} \big)  + a u \overline v \bigg) \d x;
\end{align}
cf.~\cite[Lemma~4.3]{M00}. We shall use the notation $\psi = \partial_\cL u |_{\partial \Omega}$.

\section{An inverse problem for the Dirichlet operator with partial Dirichlet-to-Neumann data}\label{333}

In this section we prove that the partial knowledge of the Dirichlet-to-Neumann map determines the Dirichlet
realization of $\cL$ in $L^2(\Omega)$ uniquely up to unitary equivalence. Recall first that \eqref{eq:ajkaj}--\eqref{eq:a} ensure that the Dirichlet operator 
\begin{align}\label{eq:AD}
 A_{\rm D} u = \cL u, \quad \dom A_{\rm D} = \left\{ u \in H^1 (\Omega) : \cL u \in L^2 (\Omega), u |_{\partial \Omega} = 0 \right\},
\end{align}
is a semibounded self-adjoint operator in $L^2(\Omega)$ since it corresponds to the closed semibounded sesquilinear form 
 $$\fa_{\rm D}[u,v]:=\fa[u,v],\qquad u,v\in\dom\fa_{\rm D}=H^1_0(\Omega),$$
via the first representation theorem; cf. \cite[Theorem~VI.2.1]{Kato} and~\cite[Chapter~VI]{EE87}.

In order to define the Dirichlet-to-Neumann map associated with $\cL$ on the boundary of the unbounded Lipschitz domain $\Omega$  we need the following lemma,
which is well known for bounded domains and remains valid in the  unbounded case. 
For the convenience of the reader we provide a short proof. By $\rho (A_{\rm D})$ we denote the resolvent set of $A_{\rm D}$, i.e., the complement of the spectrum.

\begin{lemma}\label{lem:BVPwelldef}
For each $\lambda \in \rho (A_{\rm D})$ and each $\phi \in H^{1/2} (\partial \Omega)$ the boundary value problem
\begin{align}\label{eq:BVP}
 \cL u = \lambda u, \qquad u |_{\partial \Omega} = \phi,
\end{align}
has a unique solution $u_\lambda \in H^1 (\Omega)$.
\end{lemma}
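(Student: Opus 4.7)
The plan is a standard reduction to a Lax--Milgram argument, coupled with a resolvent shift to handle the general spectral parameter. Two ingredients that are already at our disposal are: the trace operator $H^1(\Omega)\to H^{1/2}(\partial\Omega)$ is surjective with kernel $H^1_0(\Omega)$ (stated in Section~\ref{sec:2}), and $A_{\rm D}$ is a semibounded self-adjoint operator, so $\rho(A_{\rm D})$ contains some half-line $(-\infty,\lambda_0)\subset\R$.

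\emph{Uniqueness.} Suppose $u_1,u_2\in H^1(\Omega)$ both solve \eqref{eq:BVP}. Their difference $w:=u_1-u_2$ lies in $H^1(\Omega)$, has vanishing trace, and satisfies $\cL w=\lambda w\in L^2(\Omega)$. Hence $w\in\dom A_{\rm D}$ and $(A_{\rm D}-\lambda)w=0$. Because $\lambda\in\rho(A_{\rm D})$, this forces $w=0$.

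\emph{Existence for a real $\lambda_0$ below the spectrum.} Fix $\lambda_0<\inf\sigma(A_{\rm D})$, so that the shifted form $\fa_{\rm D}[\cdot,\cdot]-\lambda_0(\cdot,\cdot)_{L^2(\Omega)}$ is bounded and coercive on $H^1_0(\Omega)$ (by the uniform ellipticity \eqref{eq:elliptic} together with boundedness of the lower-order coefficients; equivalently, this is the defining form of $A_{\rm D}-\lambda_0$, which is strictly positive). By surjectivity of the trace, pick any lift $w\in H^1(\Omega)$ with $w|_{\partial\Omega}=\phi$. The antilinear functional
\begin{align*}
\eta\mapsto -\fa[w,\eta]+\lambda_0(w,\eta)_{L^2(\Omega)}
\end{align*}
is continuous on $H^1_0(\Omega)$, so Lax--Milgram produces a unique $v\in H^1_0(\Omega)$ with $\fa[v,\eta]-\lambda_0(v,\eta)_{L^2(\Omega)}=-\fa[w,\eta]+\lambda_0(w,\eta)_{L^2(\Omega)}$ for all $\eta\in H^1_0(\Omega)$. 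Setting $u_0:=w+v\in H^1(\Omega)$ yields $u_0|_{\partial\Omega}=\phi$ and $(\cL-\lambda_0)u_0=0$ in the sense of distributions; in particular $\cL u_0=\lambda_0 u_0\in L^2(\Omega)$.

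\emph{Existence for arbitrary $\lambda\in\rho(A_{\rm D})$.} Make the ansatz $u_\lambda=u_0+h$ with $h\in H^1_0(\Omega)$, so that the boundary condition in \eqref{eq:BVP} is automatic. The equation $\cL u_\lambda=\lambda u_\lambda$ then rewrites as
\begin{align*}
(\cL-\lambda)h=(\lambda-\lambda_0)u_0,
\end{align*}
whose right-hand side lies in $L^2(\Omega)$. Requiring in addition that $h\in\dom A_{\rm D}$ and that the equation hold in $L^2(\Omega)$, we must take $h:=(\lambda-\lambda_0)(A_{\rm D}-\lambda)^{-1}u_0$, which is well-defined because $\lambda\in\rho(A_{\rm D})$. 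Then $u_\lambda=u_0+h\in H^1(\Omega)$ solves \eqref{eq:BVP}.

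The only step that requires any real care is the coercivity of $\fa_{\rm D}-\lambda_0$ on $H^1_0(\Omega)$ for the unbounded domain $\Omega$; this is where the precise assumptions \eqref{eq:elliptic}--\eqref{eq:a} and the semiboundedness of $A_{\rm D}$ (equivalently, of $\fa_{\rm D}$) enter, and it is exactly the content of the first representation theorem that was invoked in the paragraph preceding the lemma. Everything else is purely formal.
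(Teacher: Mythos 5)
Your proposal is correct and follows essentially the same route as the paper's proof: lift the boundary datum via the surjective trace map, solve the coercively shifted problem on $H^1_0(\Omega)$ by Lax--Milgram (the paper uses the Fr\'echet--Riesz theorem for the equivalent inner product $\fa+\zeta_0(\cdot,\cdot)_{L^2(\Omega)}$, which amounts to the same thing), and then correct with the resolvent $(A_{\rm D}-\lambda)^{-1}$ to pass to a general $\lambda\in\rho(A_{\rm D})$; uniqueness via $\ker(A_{\rm D}-\lambda)=\{0\}$ is identical. The only cosmetic difference is that the paper shifts by a large constant $\zeta_0$ chosen directly from the G\aa rding inequality rather than by an arbitrary $\lambda_0<\inf\sigma(A_{\rm D})$, which sidesteps the (true but unstated) interpolation step needed to get $H^1$-coercivity from $L^2$-positivity.
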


\begin{proof}
Let $\lambda \in \rho (A_{\rm D})$ and $\phi \in H^{1/2} (\partial \Omega)$. Since the trace map is surjective from $H^1 (\Omega)$ to $H^{1/2} (\partial \Omega)$ 
there exists (a non-unique) $w \in H^1 (\Omega)$ with $w |_{\partial \Omega} = \phi$. Let $\fa$ be the symmetric sesquilinear form on $H^1 (\Omega)$ defined in~\eqref{eq:aForm}. 
It follows from~\eqref{eq:ajkaj} and~\eqref{eq:a} that there exists $C > 0$ such that
\begin{align}\label{eq:C}
 |\fa [u, v]| \leq C \| u \|_{H^1 (\Omega)} \| v \|_{H^1 (\Omega)}, \quad u, v \in H^1 (\Omega),
\end{align}
where $\| \cdot \|_{H^1 (\Omega)}$ denotes the norm in $H^1 (\Omega)$. In particular, the antilinear mapping 
\begin{align*}
 F_{w, \zeta} : H_0^1 (\Omega) \to \C, \quad v \mapsto  \fa [w, v] + \zeta (w, v)_{L^2 (\Omega)},
\end{align*}
is bounded on $H_0^1 (\Omega)$ for each $\zeta \in \R$; hence $F_{w, \zeta}$ belongs to the antidual of $H_0^1 (\Omega)$. Moreover, it follows from~\eqref{eq:C} and the ellipticity condition~\eqref{eq:elliptic} that we can fix $\zeta_0 \in \R$ such that
\begin{align}\label{eq:innerprod}
 \fa [u, v] + \zeta_0 (u, v)_{L^2(\Omega)}, \quad u, v \in H_0^1 (\Omega),
\end{align}
defines an inner product on $H_0^1 (\Omega)$ with an induced norm that is equivalent to the norm $\| \cdot \|_{H^1 (\Omega)}$. In particular, $H_0^1 (\Omega)$ 
equipped with the inner product in~\eqref{eq:innerprod} is a Hilbert space. By the Fr\'echet--Riesz theorem there exists a unique $u_0 \in H_0^1 (\Omega)$ 
such that
\begin{align*}
 \fa [u_0, v] + \zeta_0 (u_0, v)_{L^2 (\Omega)} = F_{w, \zeta_0} (v) =   \fa [w, v] + \zeta_0 (w, v)_{L^2 (\Omega)}, \quad v \in H_0^1 (\Omega).
\end{align*}
Consequently, $\fa [u_0 - w, v] + \zeta_0 (u_0-w,v)_{L^2(\Omega)}=0$ 
for all $v \in H_0^1 (\Omega)$, which implies $\cL (u_0 - w) +\zeta_0 (u_0-w)= 0$ in the distributional sense. 
For $\lambda\in\rho(A_{\rm D})$ it follows,
in particular, that $(\cL - \lambda) (u_0 - w) \in L^2 (\Omega)$. Let us set
\begin{align*}
 u_\lambda = u_0 - w - (A_{\rm D} - \lambda)^{-1} (\cL - \lambda) (u_0 - w) \in H^1 (\Omega).
\end{align*}
Then $u_\lambda |_{\partial \Omega} = w |_{\partial \Omega} = \phi$ and $(\cL - \lambda) u_\lambda = 0$. Thus $u_\lambda$ is a solution of~\eqref{eq:BVP}.

In order to prove uniqueness let $v_\lambda \in H^1 (\Omega)$ be a further solution of~\eqref{eq:BVP}. Then we have
\begin{align*}
 \cL (u_\lambda - v_\lambda) = \lambda (u_\lambda - v_\lambda) \quad \text{and} \quad (u_\lambda - v_\lambda) |_{\partial \Omega} = 0,
\end{align*}
that is, $(u_\lambda - v_\lambda) \in \ker (A_{\rm D} - \lambda)$. Since $\lambda \in \rho (A_{\rm D})$, it follows $u_\lambda = v_\lambda$.
\end{proof}

Lemma~\ref{lem:BVPwelldef} ensures that the Dirichlet-to-Neumann map in the following definition is well-defined.

\begin{definition}
For $\lambda \in \rho (A_{\rm D})$ the {\em Dirichlet-to-Neumann map} $M (\lambda)$ is defined  by
\begin{align*}
M (\lambda) : H^{1/2} (\partial \Omega) \to H^{- 1/2} (\partial \Omega),\qquad M (\lambda) u_\lambda |_{\partial \Omega} := \partial_\cL u_\lambda |_{\partial \Omega},
\end{align*}
for each $u_\lambda \in H^1 (\Omega)$ satisfying $\cL u_\lambda = \lambda u_\lambda$.
\end{definition}

For $\lambda \in \rho (A_{\rm D})$ we will also make use of the {\em Poisson operator} $\gamma (\lambda)$ defined by 
\begin{align}\label{eq:Poisson}
 \gamma(\lambda): H^{1/2} (\partial \Omega) \to L^2 (\Omega), \qquad \gamma (\lambda) u_\lambda |_{\partial \Omega} := u_\lambda,
\end{align}
for any $u_\lambda \in H^1 (\Omega)$ such that $\cL u_\lambda = \lambda u_\lambda$; cf.~Lemma~\ref{lem:BVPwelldef}.

We collect some properties of the Dirichlet-to-Neumann map and the Poisson operator in the following lemma. 
Its proof is analogous to the case of a bounded Lipschitz domain carried out in~\cite[Lemma~2.4]{BR12}.

\begin{lemma}\label{lem:DNproperties}
For $\lambda, \mu \in \rho (A_{\rm D})$ let $\gamma (\lambda), \gamma (\mu)$ be the Poisson operators and let $M (\lambda), M (\mu)$ be the Dirichlet-to-Neumann maps. Then the following assertions hold.
\begin{enumerate}
 \item $\gamma (\lambda)$ is bounded and its adjoint $\gamma (\lambda)^* : L^2 (\Omega) \to H^{- 1/2} (\partial \Omega)$ is given by
\begin{align*}
 \gamma (\lambda)^* u = - \partial_\cL \big( (A_{\rm D} - \overline{\lambda})^{-1} u \big) |_{\partial \Omega}, \quad u \in L^2 (\Omega).
\end{align*}
 \item The identity
 \begin{align*}
  \gamma (\lambda) = \big(I + (\lambda - \mu) (A_{\rm D} - \lambda)^{-1} \big) \gamma (\mu)
 \end{align*}
 holds.
 \item $M (\lambda)$ is a bounded operator from $H^{1/2} (\partial \Omega)$ to $H^{- 1/2} (\partial \Omega)$, 
 the operator function $\lambda \mapsto M (\lambda)$ is holomorphic on $\rho (A_{\rm D})$, and
 \begin{align*}
  (\Imag\mu)\|\gamma (\mu) \phi\|_{L^2 (\Omega)}^2 = - \Imag ( M (\mu) \phi, \phi )_{\partial \Omega}
 \end{align*}
 holds for all $\phi \in H^{1/2} (\partial \Omega)$.
\end{enumerate}
\end{lemma}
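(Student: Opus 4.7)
The plan is to treat the three assertions in order, building on the boundedness of the Poisson operator $\gamma(\lambda)$ as the foundation from which everything else follows fairly directly.

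First I would establish part (i). To see that $\gamma(\lambda)$ is bounded, I would observe that the solution space $\cN_\lambda := \{u \in H^1(\Omega) : \cL u = \lambda u\}$ is closed in $H^1(\Omega)$, since it is the kernel of the continuous map $H^1(\Omega) \to (H_0^1(\Omega))'$ defined by $u \mapsto \fa[u,\cdot] - \lambda(u,\cdot)_{L^2(\Omega)}$. Lemma~\ref{lem:BVPwelldef} says that the trace restricts to a bijection $\cN_\lambda \to H^{1/2}(\partial\Omega)$, so the open mapping theorem provides boundedness of $\gamma(\lambda)$ into $H^1(\Omega)$, and a fortiori into $L^2(\Omega)$. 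For the adjoint formula I would fix $\phi \in H^{1/2}(\partial\Omega)$ and $f \in L^2(\Omega)$, set $u_\lambda := \gamma(\lambda)\phi$ and $v := (A_{\rm D} - \overline\lambda)^{-1}f$, and pair them through the defining identity of the conormal derivative. Since $\fa$ is symmetric (an immediate consequence of $a_{jk} = \overline{a_{kj}}$ and $a$ real) and $v|_{\partial\Omega} = 0$, the $\cL$-terms collapse, leaving $(\gamma(\lambda)\phi, f)_{L^2(\Omega)} = -(\phi,\partial_\cL v|_{\partial\Omega})_{\partial\Omega}$, which is the claimed formula.

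For part (ii) I would set $u_\lambda := \gamma(\lambda)\phi$, $u_\mu := \gamma(\mu)\phi$ and note that $u_\lambda - u_\mu$ has vanishing trace while $(\cL - \lambda)(u_\lambda - u_\mu) = (\lambda - \mu)u_\mu \in L^2(\Omega)$. Hence $u_\lambda - u_\mu \in \dom A_{\rm D}$ and applying $(A_{\rm D} - \lambda)^{-1}$ yields the identity.

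For part (iii) the key step is to merge (i) and (ii) into the resolvent-type identity
\begin{equation*}
 M(\lambda) = M(\mu) - (\lambda - \mu)\,\gamma(\overline\lambda)^*\,\gamma(\mu),
\end{equation*}
which I would obtain by applying $\partial_\cL\,\cdot|_{\partial\Omega}$ to both sides of the identity in (ii) and then using (i) to rewrite the conormal derivative of $(A_{\rm D} - \lambda)^{-1}\gamma(\mu)\phi$. This identity immediately exhibits $M(\lambda)$ as bounded from $H^{1/2}(\partial\Omega)$ to $H^{-1/2}(\partial\Omega)$, provided that $M(\mu_0)$ is bounded for at least one $\mu_0 \in \rho(A_{\rm D})$; the latter follows from the continuity of the conormal derivative on $\cN_{\mu_0}$ equipped with the $H^1$ norm together with the boundedness of $\gamma(\mu_0)$ established in (i). Holomorphy of $\gamma(\lambda)$ in $\cB(H^{1/2}(\partial\Omega), L^2(\Omega))$ is read off from (ii) and the $L^2$-holomorphy of the resolvent of $A_{\rm D}$; taking adjoints and substituting $\overline\lambda$ gives holomorphy of $\lambda \mapsto \gamma(\overline\lambda)^*$ in $\cB(L^2(\Omega), H^{-1/2}(\partial\Omega))$, whence $M(\lambda)$ is holomorphic on $\rho(A_{\rm D})$. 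For the imaginary-part identity, I would apply the definition of the conormal derivative with $u = v = u_\mu$ to obtain
\begin{equation*}
 \fa[u_\mu, u_\mu] = \mu\,\|u_\mu\|_{L^2(\Omega)}^2 + (M(\mu)\phi, \phi)_{\partial\Omega},
\end{equation*}
and note that $\fa[u_\mu, u_\mu] \in \R$ by the symmetry of $\fa$, so taking imaginary parts yields the assertion.

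The main obstacle I foresee is the holomorphy statement: while $L^2$-holomorphy of $(A_{\rm D} - \lambda)^{-1}$ is immediate, the conormal derivative naturally requires a stronger convergence than $L^2$ to produce an $H^{-1/2}$-valued result. The most transparent way around this is precisely the resolvent-type identity above, where only the $L^2$-holomorphy of $\gamma(\mu)$ and its adjoint enters and the graph norm of $A_{\rm D}$ can be avoided entirely.
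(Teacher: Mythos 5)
Your proposal is correct and follows essentially the same route as the paper's proof, which is given only by reference to \cite[Lemma~2.4]{BR12}: boundedness of $\gamma(\lambda)$ via the closedness of the solution space and the open mapping theorem, Green's identity for the adjoint formula and for $\Imag (M(\mu)\phi,\phi)_{\partial\Omega}$, and the identity $M(\lambda)=M(\mu)-(\lambda-\mu)\gamma(\overline\lambda)^*\gamma(\mu)$ to transfer boundedness and obtain holomorphy. All the individual steps check out, including the sign in assertion (iii).
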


The next theorem is the main result in this section; one can view it as a generalized variant of the multidimensional Gelfand inverse boundary spectral problem 
with partial data on arbitrary unbounded Lipschitz domains. Instead of determining coefficients up to gauge equivalence here an operator uniqueness result is obtained. 
Roughly speaking Theorem~\ref{thm:main} states that the knowledge of the Dirichlet-to-Neumann map $M(\lambda)$ on a
nonempty open subset $\omega$ of the boundary $\partial\Omega$ for sufficiently many $\lambda$ determines the Dirichlet operator uniquely up to unitary equivalence. 
For bounded Lipschitz domains such a result was shown in \cite{BR12}, see also~\cite{O17}.

\begin{theorem}\label{thm:main}
Let $\cL_1, \cL_2$ be two uniformly elliptic differential expressions on $\Omega$ of the form~\eqref{eq:diffexpr} with coefficients 
$a_{jk, 1}, a_{j, 1}, a_1$ and $a_{jk, 2}, a_{j, 2}, a_2$, respectively, satisfying \eqref{eq:ajkaj}--\eqref{eq:a}. 
Denote by $A_{\rm D, 1}$, $A_{\rm D, 2}$ and $M_1 (\lambda), M_2 (\lambda)$ 
the corresponding self-adjoint Dirichlet operators and Dirichlet-to-Neumann maps, respectively. Assume that $\omega \subset \partial \Omega$ is an open, 
nonempty set such that
\begin{align}\label{eq:Mgleich}
 ( M_1 (\lambda) \phi, \phi)_{\partial \Omega} = ( M_2 (\lambda) \phi, \phi)_{\partial \Omega}, 
 \quad \phi\in H^{1/2} (\partial \Omega),\, \supp \phi \subset \omega,
\end{align}
holds for all $\lambda\in\cD$,
where $\cD \subset \rho (A_{\rm D, 1}) \cap \rho (A_{\rm D, 2})$ is a set with an accumulation point in $\rho (A_{\rm D, 1}) \cap \rho (A_{\rm D, 2})$. 
Then there exists a unitary operator $U$ in $L^2 (\Omega)$ such that
\begin{equation}\label{juhu}
 A_{\rm D, 2} = U A_{\rm D, 1} U^*
\end{equation}
holds.
\end{theorem}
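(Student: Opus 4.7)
The plan is to construct an explicit unitary $U$ on $L^2(\Omega)$ from the two Poisson operators $\gamma_1(\cdot),\gamma_2(\cdot)$ and then to show that $U$ intertwines the resolvents of $A_{\rm D,1}$ and $A_{\rm D,2}$. First I would exploit Lemma~\ref{lem:DNproperties}(iii) together with the identity principle: for each $\phi$ supported in $\omega$, the scalar function $\lambda\mapsto(M_1(\lambda)\phi,\phi)_{\partial\Omega}-(M_2(\lambda)\phi,\phi)_{\partial\Omega}$ is holomorphic on the connected open set $\rho(A_{\rm D,1})\cap\rho(A_{\rm D,2})$ (the complement of a subset of $\R$), and by hypothesis it vanishes on $\cD$, which has an accumulation point there. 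Hence it vanishes identically, and a standard polarization in $\phi$ upgrades this to
$(M_1(\lambda)\phi,\psi)_{\partial\Omega}=(M_2(\lambda)\phi,\psi)_{\partial\Omega}$
for every $\phi,\psi\in H^{1/2}(\partial\Omega)$ with $\supp\phi,\supp\psi\subset\omega$ and every $\lambda\in\rho(A_{\rm D,1})\cap\rho(A_{\rm D,2})$.

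Next I would combine Lemma~\ref{lem:DNproperties}(i) and (ii) to derive the Krein-type identity
$$
\gamma_i(\mu)^*\gamma_i(\lambda)=\frac{M_i(\bar\mu)-M_i(\lambda)}{\lambda-\bar\mu},\qquad\lambda\neq\bar\mu,
$$
which, combined with the previous step, gives $(\gamma_1(\lambda)\phi,\gamma_1(\mu)\psi)_{L^2(\Omega)}=(\gamma_2(\lambda)\phi,\gamma_2(\mu)\psi)_{L^2(\Omega)}$ whenever $\supp\phi,\supp\psi\subset\omega$. Setting $\cK_i:=\overline{\spann\{\gamma_i(\lambda)\phi:\lambda\in\rho(A_{\rm D,1})\cap\rho(A_{\rm D,2}),\ \supp\phi\subset\omega\}}\subset L^2(\Omega)$, the prescription $\gamma_1(\lambda)\phi\mapsto\gamma_2(\lambda)\phi$ is well-defined and isometric on a dense subspace of $\cK_1$ with dense range in $\cK_2$ and therefore extends to a unitary $U\colon\cK_1\to\cK_2$. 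Applying Lemma~\ref{lem:DNproperties}(ii) in the form
$(A_{{\rm D},i}-\lambda)^{-1}\gamma_i(\mu)\phi=(\lambda-\mu)^{-1}(\gamma_i(\lambda)\phi-\gamma_i(\mu)\phi)$
shows directly that $\cK_i$ is invariant under every $(A_{{\rm D},i}-\lambda)^{-1}$, so $\cK_i$ reduces $A_{{\rm D},i}$, and that $U$ intertwines the resolvents of $A_{\rm D,1}|_{\cK_1}$ and $A_{\rm D,2}|_{\cK_2}$.

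The main obstacle is the density claim $\cK_i=L^2(\Omega)$, which is equivalent to simplicity of the symmetric restriction $S_i:=A_{{\rm D},i}\upharpoonright\{u\in\dom A_{{\rm D},i}:\partial_{\cL_i}u|_\omega=0\}$. For $f\in\cK_i^\perp$, Lemma~\ref{lem:DNproperties}(i) forces $\partial_{\cL_i}((A_{{\rm D},i}-\bar\lambda)^{-1}f)|_\omega=0$ for all $\lambda\in\rho(A_{{\rm D},i})$, so $\cK_i^\perp$ is $A_{{\rm D},i}$-reducing and every $g\in\cK_i^\perp\cap\dom A_{{\rm D},i}$ has vanishing Dirichlet trace on $\partial\Omega$ and vanishing conormal derivative on $\omega$. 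I would then pick $g$ in a bounded spectral subspace of the self-adjoint restriction $A_{{\rm D},i}|_{\cK_i^\perp}$, so that every iterate $A_{{\rm D},i}^k g$ inherits these boundary properties and satisfies $\|A_{{\rm D},i}^k g\|\le C^k\|g\|$. Extending $g$ trivially across $\omega$ into a local Lipschitz enlargement $\tilde\Omega\supset\Omega$ then yields $\tilde g\in H^1(\tilde\Omega)$ vanishing on the open set $\tilde\Omega\setminus\overline\Omega$ with $\cL_i^k\tilde g=\widetilde{A_{{\rm D},i}^k g}$ and controlled $L^2$-bounds. A boundary unique continuation of Aronszajn type for elliptic operators with Lipschitz coefficients, iterated through these spectral-calculus estimates, should propagate the vanishing of $\tilde g$ from $\tilde\Omega\setminus\overline\Omega$ across $\omega$ into $\Omega$, forcing $g\equiv 0$ and hence $\cK_i^\perp=\{0\}$. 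I expect this to be the hard part: in the bounded setting of~\cite{BR12} the density follows at once from spectral discreteness by applying unique continuation to individual eigenfunctions of $A_{{\rm D},i}|_{\cK_i^\perp}$, but on an unbounded Lipschitz domain with possibly non-compact boundary the typical presence of continuous spectrum rules out that shortcut and forces the extension-theoretic argument just outlined. Once the density is in place, $U$ is a unitary on $L^2(\Omega)$ intertwining the two resolvents, which is equivalent to~\eqref{juhu}.
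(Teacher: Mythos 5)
Your Step~1 --- analytic continuation from $\cD$ to all of $\rho(A_{\rm D,1})\cap\rho(A_{\rm D,2})$, polarization, the Krein-type identity $\gamma_i(\mu)^*\gamma_i(\lambda)=(\lambda-\bar\mu)^{-1}(M_i(\bar\mu)-M_i(\lambda))$, and the resulting isometry $\gamma_1(\lambda)\phi\mapsto\gamma_2(\lambda)\phi$ intertwining the resolvents on its domain --- is exactly the paper's Step~1 (and is, if anything, more explicit about why $V$ is isometric on the whole span, not just on single generators). You have also correctly located the real difficulty: the density claim $\cK_i=L^2(\Omega)$.

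Your treatment of that density step, however, has a genuine gap at its core. After reducing to a vector $g$ in a bounded spectral subspace of $A_{{\rm D},i}\!\upharpoonright\!\cK_i^\perp$, whose zero extension $\tilde g$ satisfies $\cL_i^k\tilde g=\widetilde{A_{{\rm D},i}^kg}$ near $\omega$ with $\|\cL_i^k\tilde g\|\le C^k\|g\|$ and vanishes on an open subset of $\tilde\Omega\setminus\overline\Omega$, you invoke ``a boundary unique continuation of Aronszajn type \dots iterated through these spectral-calculus estimates.'' There is no such iteration: Aronszajn-type theorems apply to a single function satisfying a second-order elliptic differential inequality, and $\tilde g$ satisfies no such inequality --- $\cL_i\tilde g$ is merely another $L^2$ function, not controlled pointwise by $\tilde g$ and $\nabla\tilde g$. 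Knowing that every power $\cL_i^k\tilde g$ lies in $L^2$ is vacuous without the geometric growth bound, and converting the bound $\|\cL_i^k\tilde g\|\le C^k\|g\|$ into unique continuation requires an additional construction, which is precisely the missing idea. The paper (following B\"ar--Strohmaier \cite{BS01}) adds one variable: it forms $w(x,t)=\bigl(e^{-t\sqrt{\widetilde A_{\rm D,1}}}\widetilde u\bigr)(x)$ on $\widetilde\Omega\times(0,\infty)$, checks via Stone's formula and the vanishing of $\bigl((\widetilde A_{\rm D,1}-\mu)^{-1}\widetilde u\bigr)|_{\widetilde\Omega\setminus\Omega}$ that $w$ vanishes on $\cO\times(0,\infty)$ for an open ball $\cO\subset\widetilde\Omega\setminus\Omega$, observes that $w$ solves the genuinely elliptic equation $(-\partial_t^2+\widetilde\cL_1)w=0$ in $n+1$ variables, and only then applies standard elliptic unique continuation, finally letting $t\searrow0$ to conclude $\widetilde u=0$. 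Without this device (or an equivalent one, e.g.\ $\cosh(t\sqrt{A})g$ on your bounded spectral subspace, or Tataru's theorem for the associated wave equation) your argument does not close; note also that the paper's route makes the reduction to bounded spectral subspaces unnecessary, since $e^{-t\sqrt{z}}$ is already bounded on the whole spectrum for $t>0$.
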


Before we provide a proof of the theorem, let us point out that unitary equivalence of self-adjoint operators implies that their spectra coincide.

\begin{corollary}
Let the assumptions be as in Theorem~\ref{thm:main}. Then $\mu \in \R$ belongs to the point (discrete, essential,
continuous, absolutely continuous, singular continuous) spectrum of $A_{\rm D, 1}$ if and only if 
$\mu$ belongs to the point (discrete, essential,
continuous, absolutely continuous, singular continuous) spectrum of $A_{\rm D, 2}$, respectively.
\end{corollary}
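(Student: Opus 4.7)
The proof is a direct consequence of Theorem~\ref{thm:main} together with standard facts from the spectral theory of self-adjoint operators. The plan is as follows. First, invoke Theorem~\ref{thm:main} to obtain a unitary operator $U$ on $L^2(\Omega)$ with $A_{\rm D,2}=UA_{\rm D,1}U^*$. Everything then reduces to the well-known invariance of the various spectral components under unitary equivalence of self-adjoint operators.

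Once the unitary $U$ is in hand, I would verify each spectral type in turn, in each case by transporting the relevant object along $U$. For the point spectrum, if $A_{\rm D,1}f=\mu f$ with $f\neq 0$, then $A_{\rm D,2}(Uf)=\mu(Uf)$, so eigenvalues are preserved and $U$ induces a bijection between the eigenspaces; this immediately yields equality of point spectra and also of discrete spectra, since $U$ preserves isolation (it preserves the resolvent set via $R_\mu(A_{\rm D,2})=UR_\mu(A_{\rm D,1})U^*$) and finite multiplicity. Equality of the resolvent sets gives equality of the full spectra, hence of the essential spectra. For the continuous, absolutely continuous, and singular continuous parts I would appeal to the spectral theorem: if $E_1$ is the projection-valued spectral measure of $A_{\rm D,1}$, then $E_2(\cdot):=UE_1(\cdot)U^*$ is the spectral measure of $A_{\rm D,2}$, and for every $g\in L^2(\Omega)$ the scalar spectral measures $\|E_2(\cdot)g\|^2$ and $\|E_1(\cdot)U^*g\|^2$ coincide. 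Hence the Lebesgue decompositions of the spectral measures agree in the sense that $U$ maps the pure point, absolutely continuous and singular continuous subspaces of $A_{\rm D,1}$ unitarily onto those of $A_{\rm D,2}$. Equality of $\sigma_{\rm p}$, $\sigma_{\rm ac}$, $\sigma_{\rm sc}$ and $\sigma_{\rm c}=\sigma_{\rm ac}\cup\sigma_{\rm sc}$ follows.

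There is no real obstacle here, since the substantive work is already contained in Theorem~\ref{thm:main}; the corollary is a standard consequence of unitary equivalence, see for instance \cite[Chapter~VI]{Kato} or \cite[Chapter~VI]{EE87}. The only point to be mildly careful about is to ensure that the conclusion is drawn for spectral parts defined intrinsically from the spectral measure (as above), so that the argument does not implicitly rely on any specific realization of the operators as differential expressions.
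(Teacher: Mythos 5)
Your proposal is correct and matches the paper's approach: the paper gives no separate proof, treating the corollary as an immediate consequence of the unitary equivalence $A_{\rm D,2}=UA_{\rm D,1}U^*$ from Theorem~\ref{thm:main} together with the standard invariance of all spectral parts under unitary equivalence, which is exactly the argument you spell out via the transported spectral measure $E_2(\cdot)=UE_1(\cdot)U^*$.
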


\begin{proof}[Proof of Theorem~\ref{thm:main}]
The proof will be carried out in two steps. In the first step an isometric operator defined on a subspace of $L^2 (\Omega)$ is constructed; this step follows the strategy of the proof of~\cite[Theorem 1.3]{BR12} but is given here for completeness. 
In the second step we show that this operator extends to a unitary operator such that \eqref{juhu} holds.

{\bf Step~1.} Let $\cL_1$, $\cL_2$ be differential expressions as in the theorem and let $A_{\rm D, 1}, A_{\rm D, 2}$ and $M_1 (\lambda)$, $M_2 (\lambda)$ be the 
corresponding Dirichlet operators and Dirichlet-to-Neumann maps, respectively. Moreover, denote by $\gamma_1 (\lambda)$ and $\gamma_2 (\lambda)$ the corresponding 
Poisson operators as in~\eqref{eq:Poisson}. Assume that~\eqref{eq:Mgleich} holds for all $\lambda \in \cD$. Since 
$( M_i (\cdot) \phi, \phi)_{\partial \Omega}$ is 
holomorphic on $\rho (A_{{\rm D}, i})$ for all $\phi \in H^{1/2}(\partial\Omega)$ with $\supp\phi \subset \omega$, 
$i = 1, 2$, and $\cD$ has an accumulation point in $\rho (A_{\rm D, 1}) \cap \rho (A_{\rm D, 2})$, it follows that
\begin{align*}
 ( M_1 (\lambda) \phi, \phi)_{\partial \Omega} = ( M_2 (\lambda) \phi, \phi)_{\partial \Omega}, 
 \quad \phi \in H^{1/2} (\partial \Omega),\, \supp \phi \subset \omega,
\end{align*}
holds for all $\lambda\in\rho (A_{\rm D, 1}) \cap \rho (A_{\rm D, 2})$.
With Lemma~\ref{lem:DNproperties}~(iii) for all $\mu \in \C \setminus \R$ and all 
$\phi \in H^{1/2}(\partial\Omega)$ with $\supp\phi\subset\omega$ we obtain
\begin{equation}\label{eq:isometry}
\begin{split}
 \|\gamma_1(\mu) \phi\|_{L^2(\Omega)}^2 & = - \frac{\Imag ( M_1(\mu) \phi,\phi)_{\partial \Omega}}{\Imag \mu} \\
 & = - \frac{\Imag ( M_2 (\mu) \phi,\phi)_{\partial \Omega}}{\Imag \mu} = \|\gamma_2 (\mu) \phi\|_{L^2(\Omega)}^2.
\end{split}
\end{equation}
Let us define a linear mapping $V$ in $L^2(\Omega)$ on the domain 
\begin{equation}\label{domvii}
 \dom V=\spann \big\{\gamma_1(\mu)\phi : \phi\in H^{1/2}(\partial\Omega),\,\supp\phi\subset\omega,\,\mu \in \C \setminus \R \big\}
\end{equation}
by setting
\begin{align}\label{eq:V}
 V \gamma_1 (\mu) \phi = \gamma_2 (\mu) \phi, \quad \phi\in H^{1/2}(\partial\Omega),\,\supp\phi\subset\omega,\,\mu \in \C \setminus \R,
\end{align}
and extending it by linearity to all of $\dom V$. It follows from~\eqref{eq:isometry} that $V$ is a well-defined, isometric operator in $L^2(\Omega)$ with
\begin{align*}
 \ran V=\spann \big\{\gamma_2(\mu)\phi :\phi\in H^{1/2}(\partial\Omega),\,\supp\phi\subset\omega,\, \mu \in \C \setminus \R \big\}.
\end{align*}
Moreover, if we fix $\lambda \in \C \setminus \R$ then by Lemma~\ref{lem:DNproperties}~(ii) we have $\ran (A_{\rm D, 1} - \lambda)^{-1} \gamma_1 (\mu) \subset \dom V$ and
\begin{align*}
 V (A_{\rm D, 1} - \lambda)^{-1} \gamma_1(\mu)\phi & = V \frac{\gamma_1(\lambda) \phi- \gamma_1(\mu)\phi}{\lambda - \mu} =  \frac{\gamma_2(\lambda) \phi- \gamma_2(\mu)\phi}{\lambda - \mu}\\
  &=  (A_{\rm D, 2} - \lambda)^{-1}  \gamma_2(\mu)\phi =  (A_{\rm D, 2} - \lambda)^{-1}  V\gamma_1(\mu)\phi
\end{align*}
for all $\mu \in \C \setminus \R$ with $\mu \neq \lambda$ and all $\phi \in  H^{1/2}(\partial\Omega)$ with
$\supp\phi\subset\omega$. By linearity this implies
\begin{align}\label{eq:Vidjj}
 V (A_{\rm D, 1} - \lambda)^{-1} \upharpoonright H_\lambda = (A_{\rm D, 2} - \lambda)^{-1} V \upharpoonright H_\lambda,
\end{align}
where $H_\lambda$ is the subspace of $\dom V$ given by
\begin{align}\label{eq:Hlambda}
 H_\lambda = \spann \big\{\gamma_1(\mu)\phi : \phi\in H^{1/2}(\partial\Omega),\,\supp\phi\subset\omega,\,\mu \in \C \setminus \R, \mu \neq \lambda \big\}.
\end{align}

{\bf Step~2.} Let us show that the linear space $\dom V$ in \eqref{domvii} 
is dense in $L^2 (\Omega)$. For this choose a Lipschitz domain $\widetilde \Omega$ 
such that $\Omega\subset\widetilde \Omega$, $\partial \Omega \setminus \omega \subset \partial \widetilde \Omega$, 
and $\widetilde \Omega \setminus \Omega$ contains an open ball $\cO$, and such that $\cL_1$ admits a uniformly elliptic, formally symmetric 
extension $\widetilde\cL_1$ to $\widetilde\Omega$ with coefficients satisfying \eqref{eq:ajkaj}--\eqref{eq:a} on $\widetilde \Omega$. 
Let $\widetilde A_{\rm D, 1}$ 
denote the self-adjoint Dirichlet operator associated with $\widetilde \cL_1$ in $L^2 (\widetilde \Omega)$,
\begin{align*}
 \widetilde A_{\rm D, 1} \widetilde u = \widetilde \cL_1 \widetilde u, \quad \dom \widetilde A_{\rm D, 1} = 
 \bigl\{ \widetilde u \in H^1 (\widetilde \Omega) : \widetilde \cL_1 \widetilde u \in L^2 (\widetilde \Omega), \widetilde u |_{\partial \widetilde \Omega} = 0 \bigr\}.
\end{align*}
Since $\widetilde A_{\rm D, 1}$ is semibounded from below, we can assume without loss of generality that this operator has a positive lower bound $\eta$. 
In fact, when a constant is added to the zero order term of $\cL_1$ (and $\widetilde\cL_1$) the linear space $\dom V$ in \eqref{domvii} remains the same.

For each $\widetilde v \in L^2 (\widetilde \Omega)$ such that $\widetilde v$ vanishes on $\Omega$ we define
\begin{align*}
 \widetilde u_{\mu, \widetilde v} = (\widetilde A_{\rm D, 1} - \mu)^{-1} \widetilde v, \quad \mu \in \C \setminus \R. 
\end{align*}
Moreover, denote by $u_{\mu, \widetilde v}$ the restriction of $\widetilde u_{\mu, \widetilde v}$ to $\Omega$. 
Then $u_{\mu, \widetilde v} \in H^1 (\Omega)$, $\cL_1 u_{\mu, \widetilde v} = \mu u_{\mu, \widetilde v}$, and 
$\supp (u_{\mu, \widetilde v} |_{\partial \Omega}) \subset \omega$, that is, with 
$\phi := u_{\mu, \widetilde v} |_{\partial \Omega} \in H^{1/2}(\partial\Omega)$ we have $u_{\mu, \widetilde v} = \gamma (\mu) \phi$ and $\supp\phi\subset\omega$; in particular, $u_{\mu, \widetilde v} \in \dom V$ holds for all $\mu \in \C \setminus \R$ and all $\widetilde v \in L^2 (\widetilde \Omega)$ with $\widetilde v |_\Omega = 0$.

Let $u \in L^2 (\Omega)$ such that $u$ is orthogonal to $\dom V$. Then the extension $\widetilde u$ of $u$ by zero to $\widetilde \Omega$ satisfies
\begin{align*}
 0 = (u, u_{\overline \mu, \widetilde v})_{L^2 (\Omega)} 
 = \bigl( \widetilde u, (\widetilde A_{\rm D, 1} - \overline \mu)^{-1} \widetilde v \bigr)_{L^2 (\widetilde \Omega)} 
 = \bigl( (\widetilde A_{\rm D, 1} - \mu)^{-1} \widetilde u, \widetilde v \bigr)_{L^2 (\widetilde \Omega)} 
\end{align*}
for all $\mu \in \C \setminus \R$ and all $\widetilde v \in L^2 (\widetilde \Omega)$ with $\widetilde v |_\Omega = 0$. Hence
\begin{align}\label{vanish}
  \bigl( (\widetilde A_{\rm D, 1} - \mu)^{-1} \widetilde u \bigr) \big|_{\widetilde \Omega \setminus \Omega} = 0, \quad \mu \in \C \setminus \R.
\end{align}
Following an idea from~\cite[Section~3]{BS01} we define the operator semigroup 
\begin{align*}
 T (t) = e^{- t \sqrt{\widetilde A_{\rm D, 1}}}, \quad t \geq 0,
\end{align*}
generated by the square root of $\widetilde A_{\rm D, 1}$. Then $t \mapsto T (t) \widetilde u$ is twice differentiable with
\begin{align*}
 \partial_t^2 T (t) \widetilde u = \widetilde A_{\rm D, 1} T (t) \widetilde u, \quad t > 0,
\end{align*}
from which we conclude
\begin{align}\label{Lkern}
 \big(- \partial_t^2 + \widetilde \cL_1 \big) T (t) \widetilde u = 0, \quad x \in \widetilde \Omega,\, t > 0,
\end{align}
in the distributional sense. Note that
$$(x, t) \mapsto \bigl(e^{- t \sqrt{\widetilde A_{\rm D, 1}}} \widetilde u\bigr) (x)\in L^2 (\widetilde \Omega \times (0, \infty) ).$$  Since the differential expression $\cL_1$ 
is uniformly elliptic on $\widetilde \Omega$, regularity theory implies 
$e^{- t \sqrt{\widetilde A_{\rm D, 1}}} \widetilde u \in H^2_{\loc} (\widetilde \Omega \times (0, \infty) )$.
For any real numbers $a, b$, $a < b$, which are no eigenvalues of $\widetilde A_{\rm D, 1}$ the Stone formula
\begin{align*}
 E_1 ((a, b)) \widetilde u = \lim_{\eps \searrow 0} \frac{1}{2 \pi i} \left( \int_a^b \bigl(\widetilde A_{\rm D, 1} - (z + i \eps) \bigr)^{-1} - 
        \bigl(\widetilde A_{\rm D, 1} - (z - i \eps) \bigr)^{-1} \d z \right) \widetilde u
\end{align*}
for the spectral measure $E_1 (\cdot)$ of $\widetilde A_{\rm D, 1}$ and~\eqref{vanish} imply 
$ (E_1 ((a, b)) \widetilde u) |_{\widetilde \Omega \setminus \Omega} = 0$. Thus, in particular, for each $t \geq 0$ 
\begin{align}\label{vanish2}
 \left( e^{- t \sqrt{\widetilde A_{\rm D, 1}}} \widetilde u \right) \Big|_{\widetilde \Omega \setminus \Omega} = \left( \int_\eta^\infty e^{- t \sqrt{z}} \d E_1 (z) \widetilde u \right) \Big|_{\widetilde \Omega \setminus \Omega} = 0.
\end{align}
By~\eqref{vanish2}, 
$e^{- t \sqrt{\widetilde A_{\rm D, 1}}} \widetilde u$ vanishes on the nonempty, open set $\cO \times (0, \infty)$, and~\eqref{Lkern} and unique continuation yield $T (t) \widetilde u = 0$ identically on $\widetilde \Omega$ for all $t > 0$, see, e.g.,~\cite{W93}. Thus, taking the limit $t \searrow 0$ we obtain $\widetilde u = 0$ and, hence, $u = 0$. Thus $\dom V$ is dense in $L^2 (\Omega)$. Analogously one shows that $\ran V$ is dense in $L^2 (\Omega)$.

To summarize, the operator $V$ in~\eqref{eq:V} is densely defined and isometric in $L^2 (\Omega)$ with a dense range. Hence it extends by continuity to a unitary 
operator $U : L^2 (\Omega) \to L^2 (\Omega)$. Moreover, note that the space $H_\lambda \subset \dom V$ in~\eqref{eq:Hlambda} 
is dense in $L^2 (\Omega)$ as well since $(\gamma (\mu) \phi, u)_{L^2 (\Omega)} = 0$ for all $\mu \in \C \setminus \R$ with $\mu \neq \lambda$ 
and all $\phi \in H^{1/2} (\partial \Omega)$ with $\supp \phi \subset \omega$ implies, by continuity, 
$(\gamma (\mu) \phi, u)_{L^2 (\Omega)} = 0$ for all $\mu \in \C \setminus \R$ and all $\phi \in H^{1/2} (\partial \Omega)$ with $\supp \phi \subset \omega$ and hence $u = 0$. Therefore the identity~\eqref{eq:Vidjj} extends to
\begin{align*}
 U (A_{\rm D, 1} - \lambda)^{-1} = (A_{\rm D, 2} - \lambda)^{-1} U,
\end{align*}
which implies $U \dom A_{\rm D, 1} = \dom A_{\rm D, 2}$ and $A_{\rm D, 2} = U A_{\rm D, 1} U^*$. This completes the proof of Theorem~\ref{thm:main}.
\end{proof}

\section{An inverse problem for a mixed non-self-adjoint Dirichlet--Robin operator with partial Dirichlet-to-Neumann data}\label{4444}

In this section 
we consider non-self-adjoint operators with mixed Dirichlet--Robin boundary conditions. 
We shall provide a variant of Theorem~\ref{thm:main} for $m$-sectorial elliptic operators satisfying a Robin boundary condition on an open subset $\omega\subset\partial\Omega$
and Dirichlet boundary conditions on $\partial\Omega\setminus\omega$. Here the knowledge of the Dirichlet-to-Neumann map is assumed locally at the 
same subset $\omega$ of $\partial\Omega$ on which the Robin condition is given.

In order to define the operators under consideration, let us set
\begin{align*}
 H^{1/2}_\omega = \overline{\big\{ \phi \in H^{1/2} (\partial \Omega) : \supp \phi \subset \omega \big\}},
\end{align*}
where the closure is taken in $H^{1/2} (\partial \Omega)$. Let $\theta\in L^\infty(\partial\Omega)$ be a complex-valued function such that $\theta\vert_{\partial\Omega\setminus\omega}=0$,
and consider the quadratic form
\begin{align*}
 \fa_{\theta, \omega} [u, v] = \fa [u, v] + ( \theta u |_{\partial \Omega}, v |_{\partial \Omega})_{\partial \Omega}, 
 \quad \dom \fa_{\theta, \omega} = \left\{ u \in H^1 (\Omega) : u |_{\partial \Omega} \in H^{1/2}_\omega \right\},
\end{align*}
where $\fa$ is given in~\eqref{eq:aForm}. One verifies that $\fa_{\theta, \omega}$ is a densely defined, sectorial, closed form in $L^2(\Omega)$ and gives rise to the $m$-sectorial operator 
\begin{align}\label{eq:Athetaomega}
\begin{split}
  A_{\theta, \omega} u & = \cL u, \\
 \dom A_{\theta, \omega} & = \big\{ u \in H^1 (\Omega) : \cL u \in L^2 (\Omega), \partial_\cL u |_{\omega} + \theta u |_{\omega} = 0, u |_{\partial \Omega} \in H^{1/2}_\omega \big\};
\end{split}
\end{align}
this operator realization of $\cL$ in $L^2(\Omega)$ is subject to a Dirichlet boundary condition on $\partial \Omega \setminus \omega$
and the Robin boundary condition $\partial_\cL u |_{\omega} + \theta u |_{\omega} = 0$ on $\omega$, which is understood as  
\begin{equation}\label{bcjussi}
 \bigl( \partial_\cL u |_{\partial\Omega} + \theta u |_{\partial\Omega},\phi\bigr)_{\partial\Omega}=0,\qquad \phi\in H^{1/2} (\partial \Omega), \supp \phi \subset \omega.
\end{equation}
Note also that for a real-valued $\theta\in L^\infty(\partial\Omega)$ such that $\theta\vert_{\partial\Omega\setminus\omega}=0$ 
the operator $A_{\theta, \omega}$ in \eqref{eq:Athetaomega} is self-adjoint in $L^2(\Omega)$ 
and semibounded from below.

\begin{theorem}\label{thm:main-j}
Let $\cL_1, \cL_2$ be two uniformly elliptic differential expressions on $\Omega$ of the form~\eqref{eq:diffexpr} 
with coefficients $a_{jk, 1}, a_{j, 1}, a_1$ and $a_{jk, 2}, a_{j, 2}, a_2$, respectively, satisfying \eqref{eq:ajkaj}--\eqref{eq:a}, and let
$M_1 (\lambda), M_2 (\lambda)$ be the corresponding Dirichlet-to-Neumann maps. Assume that $\omega \subset \partial \Omega$ is an open, nonempty set such that
\begin{align}\label{eq:Mgleich22}
 ( M_1 (\lambda) \phi, \phi)_{\partial \Omega} = ( M_2 (\lambda) \phi, \phi)_{\partial \Omega}, \qquad \phi \in H^{1/2} (\partial \Omega), 
 \supp \phi \subset \omega,
\end{align}
holds for all $\lambda \in \cD$, where $\cD \subset \rho (A_{\rm D, 1}) \cap \rho (A_{\rm D, 2})$ is a set with an accumulation point in $\rho (A_{\rm D, 1}) \cap \rho (A_{\rm D, 2})$. 
Let $\theta\in L^\infty(\partial\Omega)$ be a complex-valued function such that $\theta\vert_{\partial\Omega\setminus\omega}=0$ and denote by $A_{\theta, \omega, 1}$ and $A_{\theta, \omega, 2}$
the $m$-sectorial operators associated with $\cL_1$ and $\cL_2$, respectively, as in~\eqref{eq:Athetaomega}.
Then there exists a unitary operator $U$ in $L^2 (\Omega)$ (the same as in Theorem~\ref{thm:main}) such that 
\begin{align*}
 A_{\theta, \omega, 2} = U A_{\theta, \omega, 1} U^*
\end{align*}
holds.
\end{theorem}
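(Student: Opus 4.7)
The plan is to derive a Krein-type resolvent formula for $A_{\theta,\omega,i}$ that expresses $(A_{\theta,\omega,i}-\lambda)^{-1}$ in terms of the Dirichlet resolvent $(A_{{\rm D},i}-\lambda)^{-1}$, the Poisson operator $\gamma_i(\lambda)$, its adjoint $\gamma_i(\overline\lambda)^*$, and the operator $M_i(\lambda)+\theta$ restricted to the closed subspace $H^{1/2}_\omega$, and then to verify that the unitary $U$ produced in the proof of Theorem~\ref{thm:main} intertwines each of these ingredients. For brevity let $P_\omega\colon H^{-1/2}(\partial\Omega)\to (H^{1/2}_\omega)^*$ denote the restriction of a functional on $H^{1/2}(\partial\Omega)$ to the subspace $H^{1/2}_\omega$.

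First I would upgrade the partial intertwining relations produced in the proof of Theorem~\ref{thm:main}. Combining $V\gamma_1(\mu)\phi=\gamma_2(\mu)\phi$ (for $\mu\in\C\setminus\R$, $\supp\phi\subset\omega$) with the resolvent identity in Lemma~\ref{lem:DNproperties}(ii) and $U(A_{{\rm D},1}-\lambda)^{-1}=(A_{{\rm D},2}-\lambda)^{-1}U$, I obtain $U\gamma_1(\lambda)\phi=\gamma_2(\lambda)\phi$ for every $\lambda\in\rho(A_{{\rm D},1})\cap\rho(A_{{\rm D},2})$ and, by continuity, for every $\phi\in H^{1/2}_\omega$. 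Taking adjoints and using unitarity of $U$ yields
$$P_\omega\gamma_2(\overline\lambda)^*U=P_\omega\gamma_1(\overline\lambda)^*.$$
Polarising~\eqref{eq:Mgleich22} in both arguments (using $\phi\pm\psi$ and $\phi\pm i\psi$) and passing to the closure shows $(M_1(\lambda)\phi,\psi)_{\partial\Omega}=(M_2(\lambda)\phi,\psi)_{\partial\Omega}$ for all $\phi,\psi\in H^{1/2}_\omega$, i.e.\ $P_\omega M_1(\lambda)|_{H^{1/2}_\omega}=P_\omega M_2(\lambda)|_{H^{1/2}_\omega}$. Since $\theta$ vanishes outside $\omega$, the bounded operator
$$N(\lambda):=P_\omega(M_i(\lambda)+\theta)|_{H^{1/2}_\omega}\colon H^{1/2}_\omega\to (H^{1/2}_\omega)^*$$
is the same for $i=1,2$.

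Next I would establish the Krein formula. For $f\in L^2(\Omega)$ and $\lambda\in\rho(A_{{\rm D},i})\cap\rho(A_{\theta,\omega,i})$, decomposing $u:=(A_{\theta,\omega,i}-\lambda)^{-1}f$ as $u=(A_{{\rm D},i}-\lambda)^{-1}f+\gamma_i(\lambda)\phi$ with $\phi:=u|_{\partial\Omega}\in H^{1/2}_\omega$, and using Lemma~\ref{lem:DNproperties}(i) together with the definition of $M_i(\lambda)$ to compute $\partial_{\cL_i}u|_{\partial\Omega}=-\gamma_i(\overline\lambda)^*f+M_i(\lambda)\phi$, the Robin condition~\eqref{bcjussi} translates into $N(\lambda)\phi=P_\omega\gamma_i(\overline\lambda)^*f$. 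Invertibility of $N(\lambda)$ (forced by $\lambda\in\rho(A_{\theta,\omega,i})$) then yields
$$(A_{\theta,\omega,i}-\lambda)^{-1}=(A_{{\rm D},i}-\lambda)^{-1}+\gamma_i(\lambda)\,N(\lambda)^{-1}\,P_\omega\gamma_i(\overline\lambda)^*.$$
Choosing $\lambda$ with sufficiently negative real part so that $\lambda$ lies in all four resolvent sets (possible because $A_{{\rm D},i}$ is semibounded and $A_{\theta,\omega,i}$ is $m$-sectorial), and applying the intertwinings from the previous paragraph term by term, I obtain $U(A_{\theta,\omega,1}-\lambda)^{-1}U^*=(A_{\theta,\omega,2}-\lambda)^{-1}$, which is equivalent to $A_{\theta,\omega,2}=UA_{\theta,\omega,1}U^*$.

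The hard part will be the rigorous justification of the Krein-type resolvent formula in this mixed Dirichlet--Robin setting on a possibly unbounded Lipschitz domain: one has to identify the correct dual pairing between $H^{1/2}_\omega$ and its image under $P_\omega$, prove that $N(\lambda)\colon H^{1/2}_\omega\to (H^{1/2}_\omega)^*$ is boundedly invertible exactly for $\lambda\in\rho(A_{\theta,\omega,i})$, and carefully handle the fact that elements of $H^{1/2}_\omega$ are only $H^{1/2}$-limits of functions compactly supported in $\omega$ rather than genuinely compactly supported themselves. Once these technicalities are in place, the conclusion $A_{\theta,\omega,2}=UA_{\theta,\omega,1}U^*$ follows essentially by inspection from the intertwinings, with $U$ being exactly the unitary constructed in Theorem~\ref{thm:main}.
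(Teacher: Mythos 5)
Your proposal is correct and follows essentially the same route as the paper: the paper proves exactly your Krein-type formula as Proposition~\ref{prop:Krein} (via the same decomposition $u=(A_{\rm D}-\lambda)^{-1}v+\gamma(\lambda)\,u|_{\partial\Omega}$ and Lemma~\ref{lem:DNproperties}\,(i)), and then intertwines each factor with the unitary $U$ from Theorem~\ref{thm:main} just as you do. The only remark worth making is that the ``hard part'' you flag at the end is not actually needed: one never has to show that $P_\omega(\theta+M(\lambda))\!\upharpoonright\! H^{1/2}_\omega$ is \emph{boundedly} invertible, since injectivity (immediate from $\lambda\in\rho(A_{\theta,\omega})$, because $P_\omega(\theta+M(\lambda))\psi=0$ forces $\gamma(\lambda)\psi\in\ker(A_{\theta,\omega}-\lambda)=\{0\}$) together with the fact that your own derivation exhibits $P_\omega\gamma_i(\overline\lambda)^*f$ as an element of the range already makes the resolvent formula and the subsequent intertwining argument rigorous.
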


Theorem~\ref{thm:main-j} is essentially a consequence of Theorem~\ref{thm:main} and the following proposition, which relates the resolvent of the Dirichlet operator $A_{\rm D}$
in \eqref{eq:AD}
to the resolvent of the operator $A_{\theta,\omega}$ via a perturbation term containg the Dirichlet-to-Neumann map and the function $\theta$.
We shall restrict elements in $H^{- 1/2} (\partial \Omega)$ to $\omega$ and use the operator
\begin{align}\label{eq:Pomega}
 P_\omega : H^{- 1/2} (\partial \Omega) \to \big\{ \psi |_\omega : \psi \in H^{- 1/2} (\partial \Omega) \big\}, \quad P_\omega \psi = \psi |_\omega;
\end{align}
here the restriction $\psi |_\omega$ is defined by $( \psi\vert_\omega,\phi):=( \psi,\phi)_{\partial\Omega}$ for all $\phi \in H^{1/2} (\partial \Omega)$ with $\supp \phi \subset \omega$.
One can view $P_\omega$ as the dual of the embedding operator from $H^{1/2}_\omega$ into $H^{1/2}(\partial\Omega)$.

\begin{proposition}\label{prop:Krein}
Let $\omega \subset \partial \Omega$ be an open, nonempty set, let $\theta\in L^\infty(\partial\Omega)$ be a complex-valued function such that 
$\theta\vert_{\partial\Omega\setminus\omega}=0$, and let
$A_{\theta, \omega}$ be the $m$-sectorial operator defined in~\eqref{eq:Athetaomega}. 
Then the operator $P_\omega (\theta + M (\lambda)) \!\upharpoonright\! H^{1/2}_\omega$ 
is injective for all $\lambda\in \rho (A_{\theta, \omega}) \cap \rho (A_{\rm D})$ and the identity
\begin{align}\label{eq:Krein}
 (A_{\theta, \omega} - \lambda)^{-1} = (A_{\rm D} - \lambda)^{-1} + \gamma (\lambda) 
 \bigl( P_\omega (\theta + M (\lambda)) \!\upharpoonright\! H^{1/2}_\omega \bigr)^{-1} P_\omega \gamma (\overline{\lambda})^* 
\end{align}
holds for all $\lambda \in \rho (A_{\theta, \omega}) \cap \rho (A_{\rm D})$.
\end{proposition}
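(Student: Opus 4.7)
The plan is to prove the proposition in two stages: first establish injectivity of $P_\omega(\theta + M(\lambda))\!\upharpoonright\! H^{1/2}_\omega$ by identifying its kernel with $\ker(A_{\theta,\omega}-\lambda)$, and then verify the resolvent identity~\eqref{eq:Krein} by decomposing $(A_{\theta,\omega}-\lambda)^{-1}v$ into a Dirichlet piece plus a $\lambda$-harmonic correction produced by the Poisson operator.

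For the injectivity step, I would take $\phi \in H^{1/2}_\omega$ with $P_\omega(\theta + M(\lambda))\phi = 0$ and set $u_\lambda := \gamma(\lambda)\phi$. By Lemma~\ref{lem:BVPwelldef} and the definition of $M(\lambda)$, one has $u_\lambda \in H^1(\Omega)$, $\cL u_\lambda = \lambda u_\lambda$, $u_\lambda|_{\partial\Omega} = \phi \in H^{1/2}_\omega$, and $\partial_\cL u_\lambda|_{\partial\Omega} = M(\lambda)\phi$. Since $\theta$ vanishes on $\partial\Omega\setminus\omega$, unwinding the definition of $P_\omega$ in~\eqref{eq:Pomega} translates the hypothesis $P_\omega(\theta + M(\lambda))\phi = 0$ into exactly the variational Robin condition~\eqref{bcjussi}. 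Hence $u_\lambda \in \dom A_{\theta,\omega}$ with $(A_{\theta,\omega}-\lambda)u_\lambda = 0$, and $\lambda\in\rho(A_{\theta,\omega})$ forces $u_\lambda = 0$ and therefore $\phi = 0$.

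For the resolvent formula, I would fix $v \in L^2(\Omega)$, set $u := (A_{\theta,\omega}-\lambda)^{-1}v$ and $u_D := (A_{\rm D}-\lambda)^{-1}v$, and let $w := u - u_D$. Then $w \in H^1(\Omega)$, $(\cL - \lambda)w = 0$, and $w|_{\partial\Omega} = u|_{\partial\Omega} \in H^{1/2}_\omega$, so $w = \gamma(\lambda)\phi$ with $\phi := u|_{\partial\Omega}$. Writing out the Robin boundary condition for $u$ and using $u_D|_{\partial\Omega} = 0$, $\partial_\cL w|_{\partial\Omega} = M(\lambda)\phi$, together with the identity $\gamma(\overline{\lambda})^*v = -\partial_\cL u_D|_{\partial\Omega}$ from Lemma~\ref{lem:DNproperties}(i), this turns into
\[
\bigl(P_\omega(\theta + M(\lambda))\!\upharpoonright\! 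H^{1/2}_\omega\bigr)\phi = P_\omega\gamma(\overline{\lambda})^*v.
\]
The injectivity from the first step makes $\phi$ uniquely recoverable, and then $u = u_D + \gamma(\lambda)\phi$ is exactly the formula~\eqref{eq:Krein}; as a by-product, $P_\omega\gamma(\overline{\lambda})^*v$ lies in the range of $P_\omega(\theta + M(\lambda))\!\upharpoonright\! H^{1/2}_\omega$ for every $v$, so the inversion appearing in~\eqref{eq:Krein} is legitimate.

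The main obstacle is the bookkeeping of spaces and dual pairings: one has to make sure that $\phi$ really lies in the restricted space $H^{1/2}_\omega$ (this follows from $u\in\dom A_{\theta,\omega}\subset\dom\fa_{\theta,\omega}$ combined with $u_D|_{\partial\Omega}=0$), and then translate the Robin condition, which is stated as a duality on $\partial\Omega$ against test functions supported in $\omega$, cleanly into an identity involving the restriction $P_\omega$. Because $\theta$ is already supported on $\omega$, its contribution is automatically localized there, so once these identifications are made carefully, both stages amount to a direct unravelling of the definitions in~\eqref{eq:Athetaomega}, \eqref{bcjussi}, and~\eqref{eq:Pomega}.
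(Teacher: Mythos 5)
Your proposal is correct and follows essentially the same route as the paper's proof: injectivity via identifying the kernel of $P_\omega(\theta+M(\lambda))\!\upharpoonright\! H^{1/2}_\omega$ with $\ker(A_{\theta,\omega}-\lambda)$, and the resolvent formula via the decomposition $w=(A_{\theta,\omega}-\lambda)^{-1}v-(A_{\rm D}-\lambda)^{-1}v=\gamma(\lambda)\phi$ combined with Lemma~\ref{lem:DNproperties}~(i) (your $w$ and $u$ are the paper's $u$ and $z$). The sign bookkeeping for $\gamma(\overline{\lambda})^*v=-\partial_\cL u_D|_{\partial\Omega}$ and the surjectivity remark are both handled exactly as in the paper.
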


\begin{proof}
We verify first that $P_\omega (\theta + M (\lambda)) \!\upharpoonright\! H^{1/2}_\omega$ is injective 
for $\lambda\in \rho (A_{\theta, \omega}) \cap \rho (A_{\rm D})$. 
Indeed, assume that $\psi \in H^{1/2}_\omega$ is such that $P_\omega (\theta + M (\lambda)) \psi = 0$, that is,
\begin{align*}
 \big( (\theta + M (\lambda) ) \psi, \phi \big)_{\partial \Omega} = 0, \quad \phi \in H^{1/2} (\partial \Omega),\,\, \supp \phi \subset \omega.
\end{align*}
Then $u_\lambda := \gamma (\lambda) \psi$ satisfies $\cL u_\lambda = \lambda u_\lambda$, $u_\lambda |_{\partial \Omega} \in H^{1/2}_\omega$, and 
\begin{align*}
 \big(\theta u_\lambda |_{\partial \Omega}+\partial_\cL u_\lambda |_{\partial \Omega}, \phi\big)_{\partial \Omega} = 0, 
 \quad \phi \in H^{1/2} (\partial \Omega),\,\, \supp \phi \subset \omega,
\end{align*}
which implies $u_\lambda \in \ker (A_{\theta, \omega} - \lambda)$ by \eqref{eq:Athetaomega}--\eqref{bcjussi}. 
Together with $\lambda \in \rho (A_{\theta, \omega})$ it follows $u_\lambda = 0$ and, thus, $\psi = u_\lambda |_{\partial \Omega} = 0$.

Let us now come to the proof of~\eqref{eq:Krein}. For this let $v \in L^2 (\Omega)$ be arbitrary. 
Since $\lambda \in \rho (A_{\theta, \omega}) \cap \rho (A_{\rm D})$, we can define
\begin{align}\label{eq:uw}
 u = (A_{\theta, \omega} - \lambda)^{-1} v - (A_{\rm D} - \lambda)^{-1} v \quad \text{and} \quad z = (A_{\theta, \omega} - \lambda)^{-1} v.
\end{align}
Then $u \in H^1 (\Omega)$ with $\cL u = \lambda u$, $z \in \dom A_{\theta, \omega}$, and $u |_{\partial \Omega} = z |_{\partial \Omega} \in H^{1/2}_\omega$. Moreover,
\begin{align*}
 \partial_\cL u |_{\partial \Omega} & = \partial_\cL z |_{\partial \Omega} - \partial_\cL \big( (A_{\rm D} - \lambda)^{-1} v \big) |_{\partial \Omega} 
 = \partial_\cL z |_{\partial \Omega} + \gamma (\overline{\lambda})^* v
\end{align*}
by Lemma~\ref{lem:DNproperties}~(i). For all $\psi \in H^{1/2} (\partial \Omega)$ with $\supp \psi \subset \omega$ we then obtain  
\begin{align*}
 \big( \gamma (\overline{\lambda})^* v, \psi \big)_{\partial \Omega} & 
 = ( \partial_\cL u |_{\partial \Omega} - \partial_\cL z |_{\partial \Omega}, \psi \big)_{\partial \Omega}  \\
 &= \big( M (\lambda) u |_{\partial \Omega} - \partial_\cL z |_{\partial \Omega}, \psi \big)_{\partial \Omega} = \big( (M (\lambda)  +  \theta) z |_{\partial \Omega}, \psi \big)_{\partial \Omega}.
\end{align*}
Hence $P_\omega \gamma (\overline{\lambda})^* v = P_\omega (\theta + M (\lambda)) z |_{\partial \Omega}$, that is, 
$P_\omega \gamma (\overline{\lambda})^* v \in \ran (P_\omega (\theta + M (\lambda))\! \upharpoonright\! H^{1/2}_\omega)$ and 
\begin{align*}
 \bigl( P_\omega (\theta + M (\lambda)) \!\upharpoonright\! H^{1/2}_\omega \bigr)^{-1} P_\omega \gamma (\overline{\lambda})^* v = 
 z |_{\partial \Omega} = u |_{\partial \Omega}.
\end{align*}
It follows
\begin{align*}
 \gamma (\lambda) \bigl( P_\omega (\theta + M (\lambda)) \!\upharpoonright \! H^{1/2}_\omega \bigr)^{-1} 
 P_\omega \gamma (\overline{\lambda})^* v = \gamma (\lambda) u |_{\partial \Omega} = u,
\end{align*}
which, together with the definition of $u$ in~\eqref{eq:uw}, completes the proof of~\eqref{eq:Krein}.
\end{proof}

\begin{proof}[Proof of Theorem~\ref{thm:main-j}]
Let $U$ be the unitary operator in $L^2 (\Omega)$ constructed in the proof of Theorem~\ref{thm:main}, which satisfies
\begin{align}\label{eq:jawollo}
 U \gamma_1 (\mu) \phi = \gamma_2 (\mu) \phi
\end{align}
for all $\mu \in \C \setminus \R$ and all $\phi \in H^{1/2} (\partial \Omega)$ with $\supp \phi \subset \omega$ as well as
\begin{equation}\label{holladi}
 U (A_{\rm D, 1} - \lambda)^{-1} = (A_{\rm D, 2} - \lambda)^{-1} U
\end{equation}
for $\lambda \in\rho (A_{\rm D, 1})\cap\rho (A_{\rm D, 2})$. Let us fix $\lambda \in (\C \setminus \R) \cap \rho (A_{\theta, \omega, 1}) \cap \rho (A_{\theta, \omega, 2})$.
Then with $P_\omega$ in~\eqref{eq:Pomega} the identity
\begin{align}\label{eq:gammaAdj}
 P_\omega \gamma_1 (\overline{\lambda})^* = P_\omega \gamma_2 (\overline{\lambda})^* U
\end{align}
holds. In fact, for $u \in L^2 (\Omega)$ and $\psi \in H^{1/2} (\partial \Omega)$ with $\supp \psi \subset \omega$ we have
\begin{align*}
 (\gamma_1 (\overline{\lambda})^* u, \psi)_{\partial\Omega} & = ( u, \gamma_1 (\overline{\lambda}) \psi)_{L^2(\Omega)} = 
 (u, U^* \gamma_2 (\overline{\lambda}) \psi)_{L^2(\Omega)} = (\gamma_2 (\overline{\lambda})^* U u, \psi)_{\partial\Omega}
\end{align*}
taking into account~\eqref{eq:jawollo}; this yields~\eqref{eq:gammaAdj}. Using Proposition~\ref{prop:Krein}, \eqref{holladi},
the assumption \eqref{eq:Mgleich22}, and~\eqref{eq:gammaAdj}, we obtain 
\begin{align*}
 U (A_{\theta, \omega, 1} - \lambda)^{-1} & = U (A_{\rm D, 1} - \lambda)^{-1} + 
  U \gamma_1 (\lambda) \bigl( P_\omega (\theta + M_1 (\lambda)) \!\upharpoonright\! H^{1/2}_\omega \bigr)^{-1} P_\omega \gamma_1 (\overline{\lambda})^* \\
 & = (A_{\rm D, 2} - \lambda)^{-1} U + \gamma_2 (\lambda) \bigl( P_\omega (\theta + M_2 (\lambda))\! \upharpoonright\! H^{1/2}_\omega \bigr)^{-1} P_\omega \gamma_2 (\overline{\lambda})^* U \\
 & = (A_{\theta, \omega, 2} - \lambda)^{-1} U.
\end{align*}
This yields $A_{\theta, \omega, 2} = U A_{\theta, \omega, 1} U^*$ and completes the proof.
\end{proof}

\section{An inverse problem for a self-adjoint Robin operator with partial Robin-to-Dirichlet data}\label{55555}

In this section we turn to an inverse problem for elliptic differential operators with Robin boundary conditions on the whole boundary of the unbounded
Lipschitz domain $\Omega$. 
In contrast to the previous section we restrict ourselves to self-adjoint boundary conditions. 
More specifically, for a real-valued function $\theta\in L^\infty(\partial \Omega)$ we consider the densely defined, semibounded, closed form
\begin{align*}
 \fa_\theta [u, v] = \fa [u, v] + ( \theta u |_{\partial \Omega}, v |_{\partial \Omega})_{\partial \Omega}, 
 \quad \dom \fa_\theta =  H^1 (\Omega) ,
\end{align*}
in $L^2(\Omega)$ and the corresponding semibounded, self-adjoint Robin operator
\begin{align*}
 A_\theta u = \cL u, \quad \dom A_\theta = \left\{ u \in H^1 (\Omega) : \cL u \in L^2 (\Omega), \partial_\cL u |_{\partial \Omega} + \theta u |_{\partial \Omega} = 0 \right\}.
\end{align*}
Our aim is to prove that this operator is determined uniquely up to unitary equivalence by the knowledge of a corresponding Robin-to-Dirichlet map 
on any nonempty, open subset of the boundary.

The following lemma prepares the definition of the Robin-to-Dirichlet map. It can be proved analogously to Lemma~\ref{lem:BVPwelldef}. 

\begin{lemma}\label{lem:RobinBVP}
For each $\lambda \in \rho (A_\theta)$ and each $\psi \in H^{- 1/2} (\partial \Omega)$ the boundary value problem
\begin{align*}
 \cL u = \lambda u, \qquad \partial_\cL u |_{\partial \Omega} + \theta u |_{\partial \Omega} = \psi,
\end{align*}
has a unique solution $u_\lambda \in H^1 (\Omega)$. 
\end{lemma}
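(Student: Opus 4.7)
The plan is to follow the strategy of Lemma~\ref{lem:BVPwelldef} but with the variational problem posed on all of $H^1(\Omega)$ rather than on $H^1_0(\Omega)$, since the boundary trace is now free. The natural weak formulation is: find $u \in H^1(\Omega)$ such that
\begin{align*}
 \fa_\theta[u,v] - \lambda(u,v)_{L^2(\Omega)} = (\psi, v|_{\partial\Omega})_{\partial\Omega}, \qquad v \in H^1(\Omega),
\end{align*}
which encodes both $\cL u = \lambda u$ (testing against $v \in H^1_0(\Omega)$) and the Robin condition $\partial_\cL u|_{\partial\Omega} + \theta u|_{\partial\Omega} = \psi$ (testing against arbitrary $v \in H^1(\Omega)$ and using surjectivity of the trace and the definition of the conormal derivative).

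The main analytic step, which will be the principal obstacle, is to establish that for some $\zeta_0 \in \R$ the sesquilinear form
\begin{align*}
 (u,v) \mapsto \fa_\theta[u,v] + \zeta_0 (u,v)_{L^2(\Omega)}
\end{align*}
defines an inner product on $H^1(\Omega)$ whose induced norm is equivalent to $\|\cdot\|_{H^1(\Omega)}$. Boundedness follows from~\eqref{eq:C}, boundedness of $\theta$, and continuity of the trace map $H^1(\Omega) \to L^2_{\mathrm{loc}}(\partial\Omega)$. Coercivity combines the ellipticity inequality~\eqref{eq:elliptic} with a standard trace interpolation of the form $\|u|_{\partial\Omega}\|^2_{L^2(\partial\Omega)} \leq \eps \|\nabla u\|^2_{L^2(\Omega)} + C_\eps \|u\|^2_{L^2(\Omega)}$, which is available on the class of Lipschitz domains considered here and allows one to absorb the (possibly negative) Robin term $(\theta u|_{\partial\Omega}, u|_{\partial\Omega})$ into the elliptic part while adjusting $\zeta_0$.

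Granted this, the antilinear functional $v \mapsto (\psi, v|_{\partial\Omega})_{\partial\Omega}$ is bounded on $H^1(\Omega)$ by continuity of the trace and $\psi \in H^{-1/2}(\partial\Omega)$, so the Fr\'echet--Riesz theorem yields a unique $u_0 \in H^1(\Omega)$ with
\begin{align*}
 \fa_\theta[u_0,v] + \zeta_0(u_0,v)_{L^2(\Omega)} = (\psi, v|_{\partial\Omega})_{\partial\Omega}, \qquad v \in H^1(\Omega).
\end{align*}
Testing first with $v \in H^1_0(\Omega)$ gives $\cL u_0 + \zeta_0 u_0 = 0$ distributionally, so $\cL u_0 = -\zeta_0 u_0 \in L^2(\Omega)$; testing then with general $v$ and using the definition of the conormal derivative yields $\partial_\cL u_0|_{\partial\Omega} + \theta u_0|_{\partial\Omega} = \psi$.

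Finally I shift the spectral parameter. Since $(\cL - \lambda) u_0 = (-\zeta_0 - \lambda) u_0 \in L^2(\Omega)$, the function
\begin{align*}
 u_\lambda := u_0 - (A_\theta - \lambda)^{-1}(\cL - \lambda)u_0 \in H^1(\Omega)
\end{align*}
is well-defined, and a direct computation shows $\cL u_\lambda = \lambda u_\lambda$; moreover the correction term lies in $\dom A_\theta$ and hence satisfies the homogeneous Robin condition, so the Robin data of $u_\lambda$ equals that of $u_0$, namely $\psi$. Uniqueness is immediate: the difference of two solutions lies in $\ker(A_\theta - \lambda) = \{0\}$, as $\lambda \in \rho(A_\theta)$.
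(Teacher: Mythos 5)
Your proof is correct, and it is precisely the adaptation the paper intends: the paper gives no proof of this lemma, stating only that it ``can be proved analogously to Lemma~\ref{lem:BVPwelldef}'', and your argument (variational problem on all of $H^1(\Omega)$ with the Robin datum entering as a bounded antilinear functional, followed by the resolvent shift via $(A_\theta-\lambda)^{-1}$ and uniqueness from $\ker(A_\theta-\lambda)=\{0\}$) is exactly that adaptation. You also correctly isolate the one point that genuinely needs care on an unbounded domain --- the trace estimate $\|u|_{\partial\Omega}\|^2_{L^2(\partial\Omega)}\leq\eps\|\nabla u\|^2_{L^2(\Omega)}+C_\eps\|u\|^2_{L^2(\Omega)}$ needed for coercivity --- which does hold for the uniformly Lipschitz domains of Section~\ref{sec:2} and is the same fact the paper implicitly uses when asserting that $\fa_\theta$ is semibounded and closed.
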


Due to Lemma~\ref{lem:RobinBVP} the following definition makes sense.

\begin{definition}
For $\lambda \in \rho (A_\theta)$ the {\em Robin-to-Dirichlet map} $M_\theta (\lambda)$ is defined by 
\begin{align*}
M_\theta (\lambda) : H^{- 1/2} (\partial \Omega) \to H^{1/2} (\partial \Omega),\quad 
 M_\theta (\lambda) \big( \partial_\cL u_\lambda |_{\partial \Omega} + \theta u_\lambda |_{\partial \Omega} \big) := u_\lambda |_{\partial \Omega},
\end{align*}
for each $u_\lambda \in H^1 (\Omega)$ satisfying $\cL u_\lambda = \lambda u_\lambda$.
\end{definition}

For $\lambda\in \rho (A_\theta)$ we also define the {\em Poisson operator for the Robin problem} $\gamma_\theta (\lambda)$ by
\begin{align}\label{eq:PoissonRobin}
 \gamma_\theta (\lambda) : H^{- 1/2} (\partial \Omega) \to L^2 (\Omega),\quad 
 \gamma_\theta (\lambda) \big( \partial_\cL u_\lambda |_{\partial \Omega} + \theta u_\lambda |_{\partial \Omega} \big) := u_\lambda,
\end{align}
for any $u_\lambda \in H^1 (\Omega)$ such that $\cL u_\lambda = \lambda u_\lambda$.

In order to prove the main result of this section we collect some properties of $\gamma_\theta (\lambda)$ and $M_\theta (\lambda)$, which are analogs 
of the statements in Lemma~\ref{lem:DNproperties}. Their proofs are similar to those in \cite[Lemma 2.4]{BR12} and are not repeated here.

\begin{lemma}\label{lem:RDproperties}
For $\lambda, \mu \in \rho (A_\theta)$ let $\gamma_\theta (\lambda), \gamma_\theta (\mu)$ be the Poisson operators for the Robin problem and let $M_\theta (\lambda), M_\theta (\mu)$ be the Robin-to-Dirichlet maps. Then the following assertions hold.
\begin{enumerate}
 \item $\gamma_\theta (\lambda)$ is bounded and 
 the identity
 \begin{align*}
  \gamma_\theta (\lambda) = \big( I + (\lambda - \mu) (A_\theta - \lambda)^{-1} \big) \gamma_\theta (\mu)
 \end{align*}
 holds.
 \item $M_\theta (\lambda)$ is a bounded operator from $H^{-1/2}(\partial\Omega)$ to $H^{1/2}(\partial\Omega)$, the operator function $\lambda \mapsto M_\theta (\lambda)$ 
 is holomorphic on $\rho (A_\theta)$, and
 \begin{align*}
  (\Imag \mu)\|\gamma_\theta (\mu) \phi\|_{L^2(\Omega)}^2 = \Imag ( M_\theta (\mu) \phi, \phi)_{\partial\Omega}
 \end{align*}
 holds for all $\phi \in H^{- 1/2} (\partial \Omega)$.
\end{enumerate}
\end{lemma}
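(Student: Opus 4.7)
The plan is to mirror the strategy of Lemma~\ref{lem:DNproperties}, with the Dirichlet boundary map replaced by the Robin boundary map throughout. Having established well-posedness of the Robin problem in Lemma~\ref{lem:RobinBVP}, both assertions should reduce to a combination of the variational formulation of the Robin problem, Green's identity, and standard resolvent calculus.

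For assertion~(i) I would first obtain boundedness of $\gamma_\theta(\lambda)$ from the variational form of the Robin problem: the solution $u_\lambda=\gamma_\theta(\lambda)\psi$ is the unique element of $H^1(\Omega)$ satisfying
\[
 \fa_\theta[u_\lambda,v]-\lambda(u_\lambda,v)_{L^2(\Omega)}=(\psi,v|_{\partial\Omega})_{\partial\Omega},\qquad v\in H^1(\Omega).
\]
A shift argument as in the proof of Lemma~\ref{lem:BVPwelldef} makes the associated form coercive on $H^1(\Omega)$, so the Fr\'echet--Riesz theorem delivers a bounded map $H^{-1/2}(\partial\Omega)\ni\psi\mapsto u_\lambda\in H^1(\Omega)$, and hence a bounded map into $L^2(\Omega)$. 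For the resolvent-type identity I would put $u_\lambda=\gamma_\theta(\lambda)\psi$ and $u_\mu=\gamma_\theta(\mu)\psi$, and observe that $w:=u_\lambda-u_\mu$ satisfies the homogeneous Robin condition $\partial_\cL w|_{\partial\Omega}+\theta w|_{\partial\Omega}=0$ together with $(\cL-\lambda)w=(\lambda-\mu)u_\mu$. Therefore $w\in\dom A_\theta$ and $w=(\lambda-\mu)(A_\theta-\lambda)^{-1}u_\mu$, which rearranges to the claimed formula.

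Assertion~(ii) splits into three pieces. Boundedness of $M_\theta(\lambda):H^{-1/2}(\partial\Omega)\to H^{1/2}(\partial\Omega)$ is immediate from the $H^1$-boundedness of $\gamma_\theta(\lambda)$ composed with the continuous trace operator. Holomorphy of $\lambda\mapsto M_\theta(\lambda)$ is inherited from the identity in~(i) by composing with the trace, since $(A_\theta-\lambda)^{-1}$ is operator-valued holomorphic on $\rho(A_\theta)$. For the imaginary-part relation the key observation is that because $\theta$ is real-valued and $\fa$ is symmetric by~\eqref{eq:ajk} and~\eqref{eq:a}, the quantity $\fa_\theta[u_\mu,u_\mu]$ is real. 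Applying Green's identity to $u_\mu=\gamma_\theta(\mu)\phi$ one obtains
\[
 \fa_\theta[u_\mu,u_\mu]=\mu\|u_\mu\|_{L^2(\Omega)}^2+(\phi,M_\theta(\mu)\phi)_{\partial\Omega},
\]
and taking imaginary parts yields the claim via $(\phi,M_\theta(\mu)\phi)_{\partial\Omega}=\overline{(M_\theta(\mu)\phi,\phi)_{\partial\Omega}}$.

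I do not expect any serious obstacle; every step is analogous to, or technically easier than, its counterpart in Lemma~\ref{lem:DNproperties}. The one delicate point worth checking is the sign: in the Dirichlet version one ends up with $-\Imag(M(\mu)\phi,\phi)_{\partial\Omega}$, whereas here the duality appears in the opposite order --- with $\phi\in H^{-1/2}(\partial\Omega)$ on the left and $M_\theta(\mu)\phi\in H^{1/2}(\partial\Omega)$ on the right --- so complex conjugation of the sesquilinear duality flips the sign and produces the stated $+\Imag(M_\theta(\mu)\phi,\phi)_{\partial\Omega}$. Beyond this bookkeeping, no new ideas beyond those used in Lemma~\ref{lem:DNproperties} are required.
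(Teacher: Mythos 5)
Your argument is correct and is exactly the standard one that the paper invokes by reference to \cite[Lemma~2.4]{BR12}: well-posedness plus a coercive shift for boundedness of $\gamma_\theta(\lambda)$, the difference-of-solutions trick for the resolvent-type identity, and the Green/conormal-derivative identity applied to $u_\mu=\gamma_\theta(\mu)\phi$ together with the realness of $\fa_\theta[u_\mu,u_\mu]$ for the imaginary-part relation. You also correctly handle the only delicate point, namely the order of the arguments in the sesquilinear duality and the resulting sign, so nothing further is needed.
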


For $\phi\in H^{-1/2}(\partial\Omega)$ and an open set $\nu\subset\partial\Omega$ we shall say that $\phi$ vanishes on $\nu$ if
$(\phi,\eta)_{\partial\Omega}=0$ for all $\eta\in H^{1/2}(\partial\Omega)$ with $\supp\eta\subset\nu$. As usual, 
we define the support $\supp\phi \subset \partial \Omega$ of $\phi$ to be the complement of the union of all open sets 
on which $\phi$ vanishes.


The main result of this section is the following.

\begin{theorem}\label{thm:main2}
Let $\cL_1, \cL_2$ be two uniformly elliptic differential expressions on $\Omega$ of the form~\eqref{eq:diffexpr} with 
coefficients $a_{jk, 1}, a_{j, 1}, a_1$ and $a_{jk, 2}, a_{j, 2}, a_2$, respectively, satisfying \eqref{eq:ajkaj}--\eqref{eq:a}. Let $\theta_1,\theta_2
\in L^\infty(\partial\Omega)$ 
be real-valued and let $A_{\theta_1}$, $A_{\theta_2}$ and $M_{\theta_1} (\lambda), M_{\theta_2} (\lambda)$ 
denote the corresponding self-adjoint Robin operators and Robin-to-Dirichlet maps, respectively. Assume that $\omega \subset \partial \Omega$ is an open, nonempty set such that
\begin{align}\label{eq:Mthetagleich}
 ( M_{\theta_1} (\lambda) \phi, \phi)_{\partial \Omega} = ( M_{\theta_2} (\lambda) \phi, \phi)_{\partial \Omega}, \quad 
 \phi\in H^{- 1/2} (\partial \Omega),\, \supp \phi \subset \omega,
\end{align}
holds for all $\lambda \in \cD$, where $\cD \subset \rho (A_{\theta_1}) \cap \rho (A_{\theta_2})$ is a set with an accumulation point in 
$\rho (A_{\theta_1}) \cap \rho (A_{\theta_2})$. Then there exists a unitary operator $U$ in $L^2 (\Omega)$ such that 
\begin{align*}
 A_{\theta_2} = U A_{\theta_1} U^*
\end{align*}
holds.
\end{theorem}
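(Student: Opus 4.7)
The plan is to mirror the two-step structure used for Theorem~\ref{thm:main}, now with Robin-to-Dirichlet data and the Robin Poisson operators in place of the Dirichlet ones, and to adapt the key density argument to the new boundary condition.

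\textbf{Step 1 (isometric intertwiner).} First I would use Lemma~\ref{lem:RDproperties}~(ii), which shows $\lambda \mapsto (M_{\theta_i}(\lambda)\phi,\phi)_{\partial\Omega}$ is holomorphic on $\rho(A_{\theta_i})$ for fixed $\phi\in H^{-1/2}(\partial\Omega)$ with $\supp\phi\subset\omega$, to upgrade~\eqref{eq:Mthetagleich} from the accumulating set $\cD$ to all of $\rho(A_{\theta_1})\cap\rho(A_{\theta_2})$. Applying the imaginary part identity in Lemma~\ref{lem:RDproperties}~(ii) then yields
\[
 \|\gamma_{\theta_1}(\mu)\phi\|_{L^2(\Omega)} = \|\gamma_{\theta_2}(\mu)\phi\|_{L^2(\Omega)}
\]
for every $\mu\in\C\setminus\R$ and every such $\phi$. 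I then set
\[
\dom V=\spann\bigl\{\gamma_{\theta_1}(\mu)\phi:\phi\in H^{-1/2}(\partial\Omega),\,\supp\phi\subset\omega,\,\mu\in\C\setminus\R\bigr\}
\]
and define $V\gamma_{\theta_1}(\mu)\phi:=\gamma_{\theta_2}(\mu)\phi$, extending by linearity. The isometry property makes $V$ well-defined. Using the resolvent identity in Lemma~\ref{lem:RDproperties}~(i) exactly as in the Dirichlet case, one then obtains $V(A_{\theta_1}-\lambda)^{-1}\!\upharpoonright\! H_\lambda=(A_{\theta_2}-\lambda)^{-1}V\!\upharpoonright\! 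H_\lambda$ on the analogue of $H_\lambda$.

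\textbf{Step 2 (density).} This is where I expect the main adaptation from Theorem~\ref{thm:main}. I would choose a Lipschitz domain $\widetilde\Omega\supset\Omega$ with $\partial\Omega\setminus\omega\subset\partial\widetilde\Omega$ and $\widetilde\Omega\setminus\Omega$ containing an open ball $\cO$, and extend the coefficients of $\cL_1$ to a uniformly elliptic, formally symmetric expression $\widetilde\cL_1$ on $\widetilde\Omega$ satisfying \eqref{eq:ajkaj}--\eqref{eq:a}. Crucially, I also extend $\theta_1$ (which lives on $\partial\Omega$ and in particular on $\partial\Omega\setminus\omega\subset\partial\widetilde\Omega$) to a real-valued $\widetilde\theta\in L^\infty(\partial\widetilde\Omega)$. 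Via its form this defines a semibounded self-adjoint Robin operator $\widetilde A_{\widetilde\theta}$ on $L^2(\widetilde\Omega)$, which, after adding a constant to the zero-order term, may be assumed positive. For $\widetilde v\in L^2(\widetilde\Omega)$ vanishing on $\Omega$, the restriction of $(\widetilde A_{\widetilde\theta}-\mu)^{-1}\widetilde v$ to $\Omega$ satisfies $\cL_1 u=\mu u$ and $\partial_{\cL_1} u|_{\partial\Omega}+\theta_1 u|_{\partial\Omega}=\phi$ for some $\phi\in H^{-1/2}(\partial\Omega)$ with $\supp\phi\subset\omega$ (since the Robin condition with $\widetilde\theta=\theta_1$ holds on $\partial\Omega\setminus\omega\subset\partial\widetilde\Omega$); hence it lies in $\dom V$. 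If $u\in L^2(\Omega)$ is orthogonal to $\dom V$ and $\widetilde u$ is its zero extension, then $((\widetilde A_{\widetilde\theta}-\mu)^{-1}\widetilde u)|_{\widetilde\Omega\setminus\Omega}=0$ for all $\mu\in\C\setminus\R$. The Stone formula/semigroup argument with $T(t)=e^{-t\sqrt{\widetilde A_{\widetilde\theta}}}$ applied to the elliptic equation $(-\partial_t^2+\widetilde\cL_1)T(t)\widetilde u=0$ on $\widetilde\Omega\times(0,\infty)$, followed by unique continuation from the open set $\cO\times(0,\infty)$, forces $\widetilde u=0$. The same argument (exchanging roles) shows $\ran V$ is dense.

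\textbf{Step 3 (unitary extension).} With $\dom V$ and $\ran V$ both dense, $V$ extends by continuity to a unitary $U$ on $L^2(\Omega)$. The subspace $H_\lambda$ is dense for the same reason as in the Dirichlet case, so the intertwining from Step~1 extends to $U(A_{\theta_1}-\lambda)^{-1}=(A_{\theta_2}-\lambda)^{-1}U$, from which $UA_{\theta_1}U^*=A_{\theta_2}$ follows.

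\textbf{Anticipated obstacle.} The main technical point is ensuring that the extension $\widetilde A_{\widetilde\theta}$ is a self-adjoint, semibounded Robin operator on the possibly noncompact extended boundary $\partial\widetilde\Omega$ for which the restriction procedure really produces elements of $\dom V$; this requires that $\widetilde\theta$ coincide with $\theta_1$ on $\partial\Omega\setminus\omega$ (automatic) and that $\widetilde\Omega$ be chosen so the boundary data on $\partial\widetilde\Omega\setminus(\partial\Omega\setminus\omega)$ does not leak back into $\omega$. Once the geometric choice of $\widetilde\Omega$ and the extension of $\theta_1$ are arranged, the semigroup/unique-continuation machinery from the proof of Theorem~\ref{thm:main} carries over verbatim.
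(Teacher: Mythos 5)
Your overall architecture is exactly the paper's: the same isometric intertwiner $V$ built from the Robin Poisson operators via Lemma~\ref{lem:RDproperties}, the same extension-domain/semigroup/unique-continuation density argument, and the same passage to the unitary $U$. There is, however, one concrete gap in Step~2, and it is precisely the point where the Robin case genuinely differs from Theorem~\ref{thm:main}. You choose $\widetilde\Omega$ with $\partial\Omega\setminus\omega\subset\partial\widetilde\Omega$, as in the Dirichlet proof, and claim that the restriction $u_{\mu,\widetilde v}$ of $(\widetilde A_{\widetilde\theta}-\mu)^{-1}\widetilde v$ to $\Omega$ has Robin data $\phi=\partial_{\cL_1}u_{\mu,\widetilde v}|_{\partial\Omega}+\theta_1 u_{\mu,\widetilde v}|_{\partial\Omega}$ with $\supp\phi\subset\omega$. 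But $\phi$ lives in $H^{-1/2}(\partial\Omega)$, and its support is by definition a \emph{closed} set (the complement of the union of open sets on which $\phi$ vanishes in the dual pairing). The form-identity argument only shows that $\phi$ vanishes on open subsets of $\partial\Omega\cap\partial\widetilde\Omega$ away from the "opened" part of the boundary, so with your choice of $\widetilde\Omega$ you only obtain $\supp\phi\subset\overline{\omega}$, which need not be contained in the open set $\omega$. Since membership in $\dom V$ requires $\supp\phi\subset\omega$, the orthogonality step $(u,u_{\overline\mu,\widetilde v})_{L^2(\Omega)}=0$ is not justified and the density argument breaks down as written.

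The paper repairs this by inserting a buffer: it chooses an open $\omega_0\subset\partial\Omega$ with $\overline{\omega_0}\subset\omega$ and requires $\partial\Omega\setminus\omega_0\subset\partial\widetilde\Omega$, and sets $\widetilde\theta_1=\theta_1$ on $\partial\Omega\setminus\omega_0$ and $0$ elsewhere. Then the explicit computation with zero-extended test functions (which you should also carry out rather than assert: one lifts $\eta\in H^{1/2}(\partial\Omega)$ with $\supp\eta\subset\nu\subset\partial\Omega\setminus\overline{\omega_0}$ to $w\in H^1(\Omega)$, extends $w$ by zero to $\widetilde\Omega$, and compares the two conormal-derivative identities) yields $\supp\phi\subset\overline{\omega_0}\subset\omega$, which is what is needed. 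Your "anticipated obstacle" gestures at something in this direction, but the actual issue is not boundary data "leaking back into $\omega$"; it is that the closed support of the distributional Robin datum must be kept strictly inside the open set $\omega$. With this modification your argument coincides with the paper's proof.
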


\begin{proof}
The proof of Theorem~\ref{thm:main2} is a modification of the proof of Theorem~\ref{thm:main} and we will leave some details to the reader. 
For any $\mu \in \C\setminus\R$ and let $\gamma_{\theta_i} (\mu)$ be the Poisson operator for the Robin problem as 
defined in~\eqref{eq:PoissonRobin}, $i = 1, 2$. We define a linear mapping $V$ in $L^2 (\Omega)$ on the domain
\begin{align*}
 \dom V=\spann \big\{\gamma_{\theta_1} (\mu)\phi : \phi\in H^{- 1/2} (\partial \Omega), \,\supp \phi \subset \omega,\,\mu \in \C \setminus \R \big\}
\end{align*}
setting
\begin{align*}
 V \gamma_{\theta_1} (\mu) \phi = \gamma_{\theta_2} (\mu) \phi, \quad \phi\in H^{- 1/2} (\partial \Omega), \,\supp \phi \subset \omega,\,\mu \in \C \setminus \R,
\end{align*}
and extending this operator by linearity to all of $\dom V$. Clearly, we have
\begin{align*}
 \ran V = \spann \big\{\gamma_{\theta_2} (\mu)\phi : \phi\in H^{- 1/2} (\partial \Omega),\, \supp \phi \subset \omega,\,\mu \in \C \setminus \R \big\}.
\end{align*}
As in Step~1 of the proof of Theorem~\ref{thm:main} we conclude from~\eqref{eq:Mthetagleich} with the help of Lemma~\ref{lem:RDproperties}~(i) and~(ii) 
(instead of Lemma~\ref{lem:DNproperties}~(ii) and~(iii)) that $V$ is well-defined, isometric, and satisfies 
\begin{align}\label{eq:Vidtheta}
 V (A_{\theta_1} - \lambda)^{-1} \upharpoonright H_\lambda = (A_{\theta_2} - \lambda)^{-1} V \upharpoonright H_\lambda
\end{align}
for each fixed $\lambda \in \C \setminus \R$, where $H_\lambda$ is the subspace of $\dom V$ given by
\begin{align*}
 H_\lambda = \spann \big\{\gamma_{\theta_1} (\mu) \phi : \phi\in H^{1/2}(\partial\Omega),\,\supp\phi\subset\omega,\,\mu \in \C \setminus \R, \mu \neq \lambda \big\}.
\end{align*}

Let us now check that $\dom V$ is dense in $L^2 (\Omega)$. Let $\widetilde \Omega$ and $\widetilde \cL_1$ be defined as in Step~2 of the proof of Theorem~\ref{thm:main} above with the additional condition that there exist  $\omega_0\subset\partial\Omega$ such that $\overline \omega_0\subset \omega$ and still $\partial\Omega\setminus\omega_0\subset\partial\widetilde\Omega$. Define the real-valued function $\widetilde \theta_1 \in L^\infty(\partial \widetilde \Omega)$ by
\begin{align*}
 \widetilde \theta_1 = \begin{cases}
                      \theta_1 & \text{on}~\partial \Omega \setminus \omega_0, \\
		      0 & \text{otherwise}.
                     \end{cases}
\end{align*}
Then  the operator
\begin{align*}
 \widetilde A_{\widetilde \theta_1} \widetilde u = \widetilde \cL_1 \widetilde u, \quad \dom \widetilde A_{\widetilde \theta_1} 
 = \bigl\{ \widetilde u \in H^1 (\widetilde \Omega) : \widetilde \cL_1 \widetilde u \in L^2 (\widetilde \Omega),~\partial_{\widetilde \cL_1} \widetilde u \big|_{\partial \widetilde \Omega} 
 + \widetilde \theta_1 \widetilde u |_{\partial \widetilde \Omega} = 0 \bigr\},
\end{align*}
in $L^2 (\widetilde \Omega)$ is self-adjoint and semibounded from below; as in the proof of Theorem~\ref{thm:main} one argues that $\widetilde A_{\widetilde \theta_1}$
can be assumed to be uniformly positive. For each $\widetilde v \in L^2 (\widetilde \Omega)$ such that 
$\widetilde v$ vanishes on $\Omega$, we define
\begin{align*}
 \widetilde u_{\mu, \widetilde v} = (\widetilde A_{\widetilde \theta_1} - \mu)^{-1} \widetilde v, \quad \mu \in \C \setminus \R. 
\end{align*}
Moreover, we denote by $u_{\mu, \widetilde v}$ the restriction of $\widetilde u_{\mu, \widetilde v}$ to $\Omega$. 
Then $\cL_1 u_{\mu, \widetilde v} = \mu u_{\mu, \widetilde v}$ and 
by construction 
\begin{equation}\label{istdasklar}
 \supp \bigl(\partial_{\cL_1} u_{\mu, \widetilde v}|_{\partial \Omega} + \theta_1 u_{\mu, \widetilde v} |_{\partial \Omega}\bigr) \subset \overline{\omega_0}\subset \omega.
\end{equation}
In fact, to justify \eqref{istdasklar} consider $x \in \partial \Omega \setminus \overline{\omega_0}$, choose 
an open set $\nu \subset \partial \Omega \setminus \overline{\omega_0}$ with $x \in \nu$, and let $\phi \in H^{1/2} (\partial \Omega)$ with 
$\supp \phi \subset \nu$. The first inclusion in \eqref{istdasklar} follows if we show
\begin{equation}\label{la2}
( \partial_{\cL_1} u_{\mu, \widetilde v} |_{\partial \Omega} + \theta_1 u_{\mu, \widetilde v} |_{\partial \Omega}, \phi )_{\partial \Omega}=0.
\end{equation}
Choose $w \in H^1 (\Omega)$ with $w |_{\partial \Omega} = \phi$ so that, in particular, $w |_{\omega_0} = 0$. 
Hence the extension $\widetilde w$ by zero of $w$ onto $\widetilde\Omega$ satisfies $\widetilde w \in H^1 (\widetilde \Omega)$ and 
$\supp (\widetilde w |_{\partial \widetilde \Omega}) \subset \partial \Omega \setminus \overline{\omega_0}$. Now it follows from the definition of the conormal derivative that
\begin{equation*}
\begin{split}
 &( \partial_{\cL_1} u_{\mu, \widetilde v} |_{\partial \Omega} + \theta_1 u_{\mu, \widetilde v} |_{\partial \Omega}, \phi )_{\partial \Omega} \\
 &\qquad\qquad = - (\cL_1 u_{\mu, \widetilde v}, w)_{L^2 (\Omega)} + \fa [u_{\mu, \widetilde v}, w] + ( \theta_1 u_{\mu, \widetilde v} |_{\partial \Omega}, w |_{\partial \Omega} )_{\partial \Omega} \\
 &\qquad\qquad = - (\widetilde \cL_1 \widetilde u_{\mu, \widetilde v}, \widetilde w)_{L^2 (\widetilde \Omega)} + 
                  \widetilde \fa [\widetilde u_{\mu, \widetilde v}, \widetilde w] + 
                  ( \widetilde \theta_1 \widetilde u_{\mu, \widetilde v} |_{\partial \widetilde\Omega}, \widetilde w |_{\partial \widetilde\Omega} )_{\partial \widetilde\Omega} \\                  
 &\qquad\qquad = ( \partial_{\widetilde \cL_1} \widetilde u_{\mu, \widetilde v}|_{\partial \widetilde\Omega} + \widetilde \theta_1 \widetilde u_{\mu, \widetilde v} |_{\partial \widetilde\Omega}, 
 \widetilde w |_{\partial \widetilde\Omega} )_{\partial \widetilde\Omega}
 = 0,
\end{split}
 \end{equation*}
which proves \eqref{la2} and therefore \eqref{istdasklar} holds.
Now it follows in the same way as in the proof of Theorem~\ref{thm:main} that
$u_{\mu, \widetilde v} \in \dom V$ for all $\mu \in \C \setminus \R$ and all $\widetilde v \in L^2 (\widetilde \Omega)$ with $\widetilde v |_\Omega = 0$.

If we choose $u \in L^2 (\Omega)$ being orthogonal to $\dom V$ and denote by $\widetilde u$ the extension of $u$ by zero to $\widetilde \Omega$ then we obtain
\begin{align*}
 0 = (u, u_{\overline \mu, \widetilde v})_{L^2 (\Omega)} = \big( \widetilde u, (\widetilde A_{\widetilde \theta_1} - \overline \mu)^{-1} 
 \widetilde v \big)_{L^2 (\widetilde \Omega)} = \big( (\widetilde A_{\widetilde \theta_1} - \mu)^{-1} \widetilde u, \widetilde v \big)_{L^2 (\widetilde \Omega)}
\end{align*}
for all $\mu \in \C \setminus \R$ and all $\widetilde v \in L^2 (\widetilde \Omega)$ which vanish on $\Omega$, 
that is, $$\big((\widetilde A_{\widetilde \theta_1} - \mu)^{-1} \widetilde u \big) |_{\widetilde \Omega \setminus \Omega} = 0$$ for all $\mu \in \C \setminus \R$. 
Proceeding further as in Step~2 of the proof of Theorem~\ref{thm:main} it can be concluded that $e^{- t \sqrt{\widetilde A_{\widetilde \theta_1}}} \widetilde u$ 
vanishes on an open, nonempty subset of $\widetilde \Omega \times (0, \infty)$ and by unique continuation it follows 
$e^{- t \sqrt{\widetilde A_{\widetilde \theta_1}}} \widetilde u = 0$ on $\widetilde \Omega$ for each $t > 0$. Hence, $u = 0$, 
which implies that $\dom V$ is dense in $L^2 (\Omega)$. Analogously one shows that $\ran V$ is dense in $L^2 (\Omega)$. 

Now it follows in the same way as in the end of Step 2 of the proof of Theorem~\ref{thm:main} that
the isometric operator $V$ extends by continuity to a unitary operator $U : L^2 (\Omega) \to L^2 (\Omega)$ and that~\eqref{eq:Vidtheta} extends to
\begin{align*}
 U (A_{\theta_1} - \lambda)^{-1} = (A_{\theta_2} - \lambda)^{-1} U.
\end{align*}
This yields $A_{\theta_2} = U A_{\theta_1} U^*$ and hence completes the proof of the theorem.
\end{proof}

\begin{ack}
J.R.\ gratefully acknowledges financial support by the grant no.\ 2018-04560 of the Swedish Research Council (VR).
\end{ack}

\end{document}